\let\mathcal \undefined
\def\mathcal{\mathscr}
\let\emptyset \undefined
\let\ge       \undefined
\let\le       \undefined
\let\leq\le
\let\geq\ge
\theoremstyle{plain}
\newtheorem{theorem}{Theorem}[section]
\theoremstyle{remark}
\newtheorem{remark}[theorem]{Remark}
\theoremstyle{plain}
\newtheorem{corollary}[theorem]{Corollary}
\newtheorem{lemma}[theorem]{Lemma}
\newtheorem{proposition}[theorem]{Proposition}
\numberwithin{equation}{section}
\def\N{{\mathbb N}}
\def\Z{{\mathbb Z}}
\def\R{{\mathbb R}}
\def\C{{\mathbb C}}
\renewcommand{\P}{{\mathbb P}}
\def\om{\omega}
\newcommand{\eps}{\varepsilon}
\newcommand{\calL}{\mathscr{L}}
\newcommand{\n}{\Vert}
\newcommand{\Dom}{{\mathsf{D}}}
\newcommand{\wt}{\widetilde}
\newcommand{\ud}{\,{\rm d}}
\begin{document}

\title[Harmonic oscillators on UMD lattices]{Spectral multiplier theorems for abstract harmonic oscillators on UMD lattices}

\author{Jan van Neerven}

\address{Delft Institute of Applied Mathematics\\
Delft University of Technology\\P.O. Box 5031\\2600 GA Delft\\The Netherlands}
\email{J.M.A.M.vanNeerven@tudelft.nl}

\author{Pierre Portal}
\address{The Australian National University, Mathematical Sciences Institute, Hanna Neumann Building, Ngunnawal and Ngambri Country,
Canberra ACT 2601, Australia}
\email{Pierre.Portal@anu.edu.au}

\author{Himani Sharma}
\address{The Australian National University, Mathematical Sciences Institute, Hanna Neumann Building, Ngunnawal and Ngambri Country, Canberra ACT 2601, Australia.}
\email{Himani.Sharma@anu.edu.au}

\date{\today}
\keywords{spectral multipliers, harmonic oscillator, twisted convolutions, canonical commutation relations, Weyl pseudo-differential calculus, UMD spaces, transference, $H^\infty$-calculus, H\"ormander calculus}

\subjclass[2000]{Primary: 47A60; Secondary: 42B15, 43A80, 47A13, 47D03, 47G30, 81S05}

\thanks{The authors gratefully acknowledge financial support from the ARC Discovery Project DP160100941.}

\begin{abstract}
We consider operators acting on a UMD Banach lattice $X$ that have the same algebraic structure as the  position and momentum operators associated with the
 harmonic oscillator $-\frac12\Delta + \frac12|x|^{2} $ acting on $L^{2}(\mathbb{R}^{d})$. More precisely,
 we consider abstract harmonic oscillators
 of the form  $\frac12 \sum _{j=1} ^{d}(A_{j}^{2}+B_{j}^{2})$ for tuples of operators $A=(A_{j})_{j=1} ^{d}$ and $B=(B_{k})_{k=1} ^{d}$,
 where $iA_j$ and $iB_k$ are assumed to
generate $C_{0}$ groups and to satisfy the
 canonical commutator relations. We prove functional calculus results for these abstract
harmonic oscillators
that match classical H\"ormander spectral multiplier estimates for the harmonic oscillator  $-\frac12\Delta + \frac12|x|^{2}$   on $L^{p}(\mathbb{R}^{d})$. This covers situations where the underlying metric measure space is not doubling and the use of  function spaces that are not particularly well suited to extrapolation arguments.
For instance, as an application we treat the harmonic oscillator on mixed norm Bargmann-Fock spaces. Our approach is based on a transference principle for the Schr\"odinger representation of the Heisenberg group that allows us to reduce the problem to the study of the twisted Laplacian on the Bochner spaces $L^{2}(\R^{2d};X)$. This can be seen as a generalisation of the Stone--von Neumann theorem to UMD lattices $X$ that are not Hilbert spaces.
\end{abstract}

\maketitle

\section{Introduction}
The harmonic oscillator $$L_{HO} =-\frac12\Delta +\frac12|x|^{2} $$ is a self-adjoint operator on $L^{2}(\mathbb{R}^{d})$ whose functional calculus satisfies Mihlin-H\"ormander spectral multiplier estimates of the form
$$
\|f({ L_{HO}}) u\|_{p} \lesssim \underset{k=1,\dots,\lfloor \frac{d}{2} \rfloor+1}{\max} \|x\mapsto x^{k}f^{(k)}(x)\|_{\infty} \|u\|_{p}$$ for all $
f \in C^{\lfloor \frac{d}{2} \rfloor+1}(\mathbb{R}_{+})$, $p \in (1,\infty)$, and $u \in L^{p}(\R^{d}) {\cap L^2(\R^d)}$ \cite{t,tbook}. More generally,  such a spectral multiplier theorem holds for operators $L$ that are self-adjoint on $L^{2}(M)$ and such that $-L$ generates a $C_0$-semigroup $(e^{-tL})_{t\ge 0}$ satisfying appropriate heat kernel bounds on $L^{2}(M)$, as long as $M$ is a doubling metric measure space \cite{dsy}.
In this paper we consider operators $L$ acting on an abstract UMD lattice $X$, assuming that $L$ has the same algebraic structure as the harmonic oscillator. Examples of UMD lattices include the spaces  $L^{p}(M)$ with $1<p<\infty$ for a general measure spaces $M$ as well as the Bargmann-Fock spaces $A^{p,q}(\C^d)$ with $1<p,q<\infty$).  Specialising to the case $X=L^{p}(M)$ with $1<p<\infty$ for the moment, we will assume that $L$ is of the form
\begin{align}\label{eq:L} L = \frac{1}{2}\sum _{j=1} ^{d}(A_{j}^{2}+B_{j}^{2})
\end{align}
for a pair of $d$-tuples $A=(A_1,\dots,A_d)$ and $B = (B_1,\dots,B_d)$ of closed and densely defined operators acting on both $L^2(M)$ and $L^{p}(M)$ such that the following assumptions hold (our statements are informal; precise versions of the assumptions will be given below):
\begin{enumerate}
\item\label{it:conditions-Lq1}
the operators $A_{j}$ and $B_{j}$ are self-adjoint on $L^{2}(M)$ for all $j=1,\dots,d$.
\item\label{it:conditions-Lq2} on both $L^2(M)$ and $L^{p}(M)$, the operators $iA_j$ and $iB_j$ generate uniformly
bounded $C_0$-groups $(e^{itA_j})_{t\in \R}$ and $(e^{itB_j})_{t\in \R}$, respectively;
\item\label{it:conditions-Lq3} the {\em Weyl commutation relations} hold for all $s,t\in \R$ and $1\le j,k\le d$:
\begin{align*}
e^{isA_j}e^{itA_k} &= e^{itA_k}e^{isA_j}, \quad e^{isB_j}e^{itB_k} = e^{itB_k}e^{isB_j},\\
      e^{isA_j}e^{itB_k} & = e^{-ist \delta_{jk}} e^{itB_k}e^{isA_j}.
\end{align*}
Here, $\delta_{jk}$ is the usual Kronecker symbol.
\end{enumerate}
By the Stone--von Neumann uniqueness theorem (see, for example, Chapter 14 of \cite{Hall}), on $L^{2}(M)$,
the study of such an operator $L$ can be reduced to the study of
the harmonic oscillator $L_{HO}$ on $L^{2}(\mathbb{R}^{d})$ (see, e.g., the proof of Lemma \ref{lem:transfer} below). This by itself, however, does not imply spectral multiplier results in $L^{p}(\R^d)$ for exponents $q \neq 2$. A striking example is given by the shifted Ornstein-Uhlenbeck operator
$$L_{OU, \frac{d}{2}}=-\Delta +x\nabla + \frac12 d$$ on $L^{2}(\mathbb{R}^{d},\gamma)$ where $\gamma$ denotes the standard Gaussian measure on $\R^d$. { By \cite[Theorem 3.1]{NP1} and \cite[Theorem 5.2]{NP2}, this operator is of the form \eqref{eq:L}.}
For
$p \neq 2$, $f({ L_{OU,\frac{d}{2}}})$ can only be bounded in $L^{p}(\mathbb{R}^{d},\gamma)$ for functions $f$ that have an holomorphic extension to a (shifted) sector of the complex plane; see \cite{GMMST}. This example is extreme, however, in that the corresponding momentum operators $B_{j}$ do not even generate $C_0$-groups on $L^{p}(\mathbb{R}^{d},\gamma)$. Less extreme examples are given in \cite[Section 3]{NP2}, along with a conditional spectral multiplier theorem \cite[Theorem 8.5]{NP2}, that is, a result under an {\em a priori} boundedness assumption on the joint (Weyl) functional calculus of $(A,B)$.

The main contribution of this paper is that we are able to completely remove this {\em a priori} assumption.
To do so, we use the non-commutative Weyl functional calculus of $(A,B)$ as a source of inspiration rather than a technical tool, as is the case in recent work regarding the Ornstein-Uhlenbeck operator; see \cite{harris, NP1}.
Our proof starts with an application of the transference principle \cite[Proposition 6.3, Lemma 6.4]{NP2}
\begin{equation}\label{eq:transference}
\|f(L)\|_{\calL(L^p(M))} \lesssim_{p,q} \|f(\widetilde{L} \otimes I_{X})\|_{\calL(L^{2}(\mathbb{R}^{2d};L^p(M)))},
\end{equation}
where $p\in [1,\infty)$ and $\widetilde{L}$ is the {\em twisted Laplacian} on $L^p(\R^{2d})$ defined by
\begin{align}\label{eq:wtL} \widetilde{L} := \frac{1}{2}\sum  _{j=1} ^{d} (\widetilde Q_{j}^{2}+\widetilde P_{j}^{2});
\end{align}
here
$\widetilde Q_{j} = -\frac{1}{2} Q_{2,j}- P_{1,j}$ and $\widetilde P_{j}=\frac{1}{2} Q_{1,j}- P_{2,j}$, where
the self-adjoint operators $Q_{1,j}, Q_{2,j}$ and $P_{1,j}, P_{2,j}$ on $L^2(\R^{2d})$ are given by
\begin{equation}\label{eq:wtPQ}
\begin{aligned}
    & Q_{1,j}f(x,y):=x_jf(x,y),\; && Q_{2,j}f(x,y):=y_jf(x,y),\\
    &\, P_{1,j}f(x,y):=\frac{1}{i}\frac{\partial f}{\partial x_j}(x,y),\; && \, P_{2,j}f(x,y):=\frac{1}{i}\frac{\partial f}{\partial y_j}(x,y),
\end{aligned}
\end{equation}
 for $j=1,\dots,d$. The rigorous definition of the operators
$-\frac{1}{2} Q_{2,j}- P_{1,j}$ and $\frac{1}{2} Q_{1,j}- P_{2,j}$ as densely defined closed operators on $L^p(M)$
is provided by \eqref{eq:BCH2} below.

This result allows us to transfer the problem from $L$ acting on $L^{p}(M)$ to $\widetilde{L}\otimes I_{L^{p}(M)}$ acting on $L^{2}(\R^{2d};L^{p}(M))$. The transference principle we use is different from many transference results
available in the literature (such as the fundamental result of Coifman-Weiss \cite{cw}), in that it is based on a non-abelian group (the Heisenberg group).
Nevertheless, its proof is similar to abelian analogues and follows the general approach
presented in \cite{Haase-trans}, with twisted convolutions replacing convolutions to account for the non-abelian structure.
Reducing the study of $L$ to the study of $\widetilde{L}\otimes I_{L^{p}(M)}$
can be seen as an $L^p$ analogue of the Stone--von Neumann theorem, but one should note that it involves the  twisted Laplacian $\widetilde{L}$ rather than the standard harmonic oscillator.
The transference only shows that results about the functional calculus of
$\widetilde{L}\otimes I_{L^p(M)}$ acting on $L^{2}(\R^{2d};L^p(M))$ can be transferred to arbitrary abstract harmonic oscillators acting on $L^p(M)$.
The operator
$f(\widetilde{L})$ is not a standard Calder\'on-Zygmund operator on $L^{2}(\R^{2d})$; rather, it is
a special kind of oscillatory singular integral operator.

To estimate the norm $\|f(\widetilde{L})\|_{L^{p}(\R^{2d})}$,
we thus use the general spectral multiplier theorem \cite[Theorem 3.2]{dsy}
valid for operators $L'$ that are self-adjoint on $L^{2}(\R^{2d})$ and such that the semigroup $(\exp(-tL))_{t \geq 0}$ satisfies Gaussian heat kernel bounds (the latter assumption being proven using an explicit computation of the heat kernel from \cite{NP2}). Since this scalar-valued estimate also holds with Muckenhoupt weights, Rubio de Francia's extrapolation principle from \cite{rubio} then allows us to prove the required estimate on the norm $\|f(\widetilde{L} \otimes I_{L^p(M)})\|_{\calL(L^{2}(\mathbb{R}^{2};L^p(M)))}$.

Our approach works, more generally, with $L^p(M)$ replaced by a
UMD Banach lattice $X$. The theory of UMD spaces and Banach lattices is presented in \cite{HNVW1} and \cite{mn}, respectively.
By a celebrated theorem of Rubio de Francia \cite{rubio}, every UMD lattice $X$ is a complex interpolation space
between a Hilbert space $H$ and a UMD space $Y$.  In the example $X = L^p(M)$ discussed above one could take $H = L^2(M)$ and $Y = L^q(M)$. Given a self-adjoint operator $L$ on the Hilbert space $H$ satisfying analogues of the above conditions \eqref{it:conditions-Lq1}--\eqref{it:conditions-Lq3}  we prove spectral multiplier results, that is, we prove estimates of the form $\|f(L)u\|_{X} \lesssim \n u\n_X$ for $u \in H\cap Y$, where
$f: [0,\infty) \to \C$ is a bounded measurable function, and $f(L)$ is defined through the spectral theorem as a bounded linear operator on $H$. Since $H\cap Y$ is dense in $X$, this gives estimates for the operator norms $\|f(L)\|_{\mathcal{L}(X)}$. For the special case when $X = L^{p}(M,w)$, with $M$ a doubling metric measure space and $w \in A^{p}$ a Muckenhoupt weight, powerful results of this type, generalising a myriad of special cases, have been obtained in \cite{dsy}. In the present paper we do not need to rely on a doubling measure assumption; instead, we consider operators that have a specific algebraic structure coming from the Heisenberg group.

To start with,  we consider the $H^{\infty}$ class of functions $f: (0,\infty) \to \C$ that have a bounded holomorphic extension to a sector of the complex plane. The theory of the functional calculus associated with such functions is presented in \cite{HNVW2,  KunWei}. In applications, it is often important to have a functional calculus for a wider class of functions which includes compactly supported functions and provides estimates similar to those given by classical spectral multiplier theorems. A H\"ormander calculus enjoying these properties can be obtained by abstract operator theoretic method from the $H^{\infty}$ calculus, provided some refined estimates can be proven for the semigroup generated by $-L$ \cite{KriegW}.

A subtle point in spectral multiplier theory is that results depend on the specific seminorms of
$f$ used to estimate $\|f(L)\|_{\calL(X)}$.
This dependency involves a relationship between integrability and smoothness parameters. See, for example, \cite[Lemma 3.2]{KriegW}, the difference between \cite[Theorem 3.1]{dsy}
and \cite[Theorem 3.2]{dsy}, and the differences between Theorem \ref{thm:main1}, Corollary \ref{cor:Hormander}, and Theorem \ref{thm:main2} in the present paper. The amount of smoothness required in our results is discussed at the end of Remark \ref{rk:BBT}.

In section \ref{sec:Fock}, we give a quintessential example of an operator $L$ to which our results apply: the harmonic oscillator on Bargmann-Fock spaces $A^{p,q}(\C^d)$ (that is, the harmonic oscillator arising from one of the most natural representations of the Heisenberg group). Although not all results obtained in this setting are new (as explained in Remark \ref{rk:BBT}, they can be obtained by combining \cite{BBT} and \cite{FG3}), they are a good showcase for the method: Bargmann-Fock spaces are mixed norm $L^p(L^q)$ spaces with respect to a non-doubling measure and hence not treatable by general harmonic analytic results such as \cite{dsy}.

\section{Main results}
\label{sec:main}
We consider the following setting. Let $X$ be a UMD lattice. By \cite[Corollary, page 216]{rubio}, there exists an interpolation pair $(H,Y)$, with $H$ a Hilbert space, $Y$ a UMD space, and a parameter $\theta \in (0,1)$, such that $X = [Y,H]_{\theta}$, the complex interpolation space of order $\theta$ between $Y$ and $H$.
For later use we recall that $H\cap Y$ is dense in $X$; see, for example, \cite[Corollary C.2.8]{HNVW1}.

 Let $A^H=(A_1^H,\dots,A_d^H)$ and $B^H = (B_1^H,\dots,B_d^H)$ be two $d$-tuples of closed and densely defined operators acting on $H$,
and let $A^Y=(A_1^Y,\dots,A_d^Y)$ and $B^Y = (B_1^Y,\dots,B_d^Y)$ be two $d$-tuples of closed and densely defined operators acting on $Y$,
such that the following hold.
\begin{enumerate}
\item
For all $j=1,\dots,d$ the operators $A_{j}^H$ and $B_{j}^H$ are self-adjoint on $H$.

\item
For all $j=1,\dots,d$ the operators $iA_j^H$ and  $iB_j^H$ generate uniformly
bounded $C_0$-groups $(e^{itA_j^H})_{t\in \R}$
and $(e^{itB_j^H})_{t\in \R}$, respectively, on $H$. Likewise, for all $j=1,\dots,d$
the operators $iA_j^Y$ and  $iB_j^Y$ generate uniformly
bounded $C_0$-groups $(e^{itA_j^Y})_{t\in \R}$
and $(e^{itB_j^Y})_{t\in \R}$, respectively, on $Y$.

\item\label{it:consistent}
For all $j=1,\dots,d$, $t\in \R$, and $x\in H\cap Y$
one has $e^{itA_j^H}x = e^{itA_j^Y}x$ and $e^{itB_j^H}x = e^{itB_j^Y}x$.

\item\label{it:WeylCCR} The {\em Weyl commutation relations} hold both in $H$ and $Y$: for all $j,k=1,\dots,d$ and $s,t\in \R$,
for $Z\in \{H,Y\}$ we have
\begin{equation*}
\begin{aligned}
e^{isA_j^Z}e^{itA_k^Z} &= e^{itA_k^Z}e^{isA_j^Z}, \quad e^{isB_j^Z}e^{itB_k^Z} = e^{itB_k^Z}e^{isB_j^Z},\\
      e^{isA_j^Z}e^{itB_k^Z} & = e^{-ist \delta_{jk}} e^{itB_k^Z}e^{isA_j^Z},
\end{aligned}
\end{equation*}
where $\delta_{jk}$ is the usual Kronecker symbol.
\end{enumerate}
Condition \eqref{it:consistent} states that both the groups $(e^{itA_j^H})_{t\in \R}$ and $(e^{itA_j^Y})_{t\in\R}$, and the groups $(e^{itB_j^H})_{t\in \R}$ and $(e^{itB_j^Y})_{t\in\R}$, are consistent on $H$ and $Y$ in the sense of \cite{tom}. By complex interpolation we obtain bounded $C_0$-groups
$(e^{itA_j})_{t\in \R}$ and $(e^{itB_j})_{t\in\R}$ on $X$. On the dense subspace $H\cap Y$ of $X$  these groups agree with the ones on $H$ and $Y$,
and hence they satisfy the same Weyl commutation relations as in condition \eqref{it:WeylCCR}.

In what follows we will omit the superscripts $H$ and $Y$; it will be clear from the context in which spaces the various groups are considered.
\color{black}

Motivated by  the Baker--Campbell-Hausdorff formula,  in $H$, $Y$ and $X$ we define, for $x,\xi  \in \R^d$,
\begin{align*} \exp(i(xA +\xi B)) &:= \exp\Bigl(\frac12ix_j\xi _k\delta_{jk}\Bigr) \exp(i\sum_{j=1}^d x_jA_j\Bigr)
\exp\Bigl(i\sum_{j=1}^d \xi _jB_j\Bigr) \\ &\phantom{:}=\exp\Bigl(-\frac12ix_j\xi _k\delta_{jk}\Bigr) \exp\Bigl(i\sum_{j=1}^d \xi _jB_j\Bigr)
\exp\Bigl(i\sum_{j=1}^d x_jA_j\Bigr).
\end{align*}
Here we write $xA = \sum_{j=1}^d x_j A_j$ and $\xi B = \sum_{j=1}^d \xi _j B_j$; likewise we will write $x\xi  = \sum_{j=1}^d x_j \xi _j$ and $A^2= \sum_{j=1}^d A_j^2$ and $B^2 = \sum_{j=1}^d B_j^2$.

As is shown in \cite[Proposition 3.13]{NP2}, the operators
$$T_{x,\xi }(t):=   \exp(it(xA +\xi B)), \quad t\in\R,$$
define uniformly bounded $C_0$-groups on  $H$, $Y$, and $X$, and on each of these spaces the intersected domains
$\Dom(A)\cap \Dom(B)$ form a core for its generator $G_{x,\xi }$. Moreover, on this core,
the generator is given by
\begin{align}\label{eq:BCH2}  G_{x,\xi }f  = ix A f+i\xi  Bf, \quad f\in\Dom(A)\cap \Dom(B).
\end{align}
This provides a rigorous interpretation of the operator $uA+vB$ as a closed and densely defined operator.

As proven in \cite[Theorem 5.2]{NP2}, the operator $$-L := -\frac{1}{2}(A^{2}+B^{2})$$ with domain $\Dom(L) = \bigcap_{j=1} ^{d} \Dom(A_{j}^{2}) \cap \Dom(B_{j}^{2})$ generates a uniformly bounded  $C_{0}$-semigroup $(e^{-tL})_{t\ge 0}$ on each of the spaces $H$, $Y$, and $X$.

We will use the notation introduced in \eqref{eq:wtL} and \eqref{eq:wtPQ}.
\begin{lemma}
\label{lem:transfer}
For every bounded, measurable, compactly supported function $f:[0,\infty)\to \C$ there exists a Schwartz function $g \in \mathcal{S}(\R^{2d})$ such that
for the operators
$f({L})$ and $f(\widetilde{L})$, considered as operators on $H$ and { $L^2(\R^{2d})$}, respectively, we have
\begin{equation}\label{eq:fLftL}
\begin{aligned}
f(L) = \frac{1}{(2\pi)^{d}} \int _{\R^{2d}} \widehat{g}(x,\xi) \exp(i(xA +\xi B)) \ud x \ud \xi ,\\
f(\widetilde{L}) = \frac{1}{(2\pi)^{d}} \int _{\R^{2d}} \widehat{g}(x,\xi)\exp(i(x\wt Q +\xi \wt P)) \ud x \ud \xi ,
\end{aligned}
\end{equation}
where
$$  \widehat{g}(x,\xi) := \frac1{(2\pi)^d} \int_{\R^{2d}} g(u,v)\exp(-i(ux +v\xi))\ud u\ud v$$
is the Fourier transform of $g$.
The first identity of \eqref{eq:fLftL} also holds when $A$, $B$, and $L$ are considered as (tuples of) operators acting in $X$.
\end{lemma}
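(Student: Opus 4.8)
The plan is to reduce both identities to the joint (Weyl) functional calculus of the relevant pair of operators, for which the second identity—being an assertion about the classical twisted Laplacian on $L^2(\R^{2d})$—is essentially the Weyl-calculus formula and can be quoted (or re-derived) from \cite{NP2}, while the first identity is obtained by the Stone--von Neumann machinery that matches the abstract pair $(A,B)$ with the concrete pair $(\wt Q,\wt P)$. First I would observe that since $f$ is bounded, measurable and compactly supported, the spectral theorem gives $f(L)$ as a bounded operator on $H$ (resp. on $L^2(\R^d)$ for $f(\wt L)$), and the spectrum of $L$ (and of $\wt L$) is contained in $[0,\infty)$; in fact, both $L$ and $\wt L$ have discrete spectrum contained in a set of the form $\{\,\tfrac{d}{2}+k : k\in\N_0\,\}$ (harmonic-oscillator spectrum), so $f$ is evaluated only at these points. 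This is the key structural input: I want to produce a single Schwartz function $g$ on $\R^{2d}$ whose Fourier transform, integrated against the Weyl system $\exp(i(x\,\cdot\, + \xi\,\cdot\,))$, reproduces $f$ of the abstract harmonic oscillator.

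The construction of $g$ goes as follows. On $L^2(\R^d)$ one has the explicit Weyl/Mehler formula: for suitable symbols the operator $\exp(i(xQ+\xi P))$ (the Schrödinger representation of the Heisenberg group) integrated against a symbol recovers the corresponding pseudo-differential operator, and the harmonic oscillator $-\tfrac12\Delta+\tfrac12|x|^2$ has a completely explicit heat kernel (Mehler kernel). Hence for each fixed bounded compactly supported $f$ I can write $f$ of the \emph{model} harmonic oscillator on $L^2(\R^d)$ as $\tfrac1{(2\pi)^d}\int_{\R^{2d}}\wh g(x,\xi)\exp(i(xQ+\xi P))\,\dd x\,\dd\xi$ for an explicit $g$: since $f$ is supported on a bounded set and the eigenvalues are $\tfrac d2+\N_0$, $f$ agrees on the spectrum with a Schwartz function $\tilde f$, and one takes $g$ to be (a constant multiple of) the symplectic Fourier transform of the Weyl symbol of $\tilde f(\text{harmonic oscillator})$, which is Schwartz because the Mehler kernel is Gaussian and $\tilde f$ is Schwartz. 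This is exactly the computation behind \cite[Proposition 3.13 / Theorem 5.2]{NP2}, so I would cite it rather than redo it; the upshot is the \emph{existence} of $g\in\mathcal S(\R^{2d})$ with the stated property for the model pair.

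Now I transfer this single identity to the two pairs in the statement. For the second line of \eqref{eq:fLftL}: $(\wt Q,\wt P)$ is, by \eqref{eq:wtL}--\eqref{eq:wtPQ}, a pair of self-adjoint operators on $L^2(\R^{2d})$ satisfying the same Weyl commutation relations as $(Q,P)$ on $L^2(\R^d)$, with $\wt L=\tfrac12(\wt Q^2+\wt P^2)$; by Stone--von Neumann this pair is a (multiplicity-$\infty$, but that is irrelevant here because everything is expressed through the group) representation of the Heisenberg group, so the same symbol $g$ yields $f(\wt L)=\tfrac1{(2\pi)^d}\int\wh g\,\exp(i(x\wt Q+\xi\wt P))\,\dd x\,\dd\xi$ — the identity is preserved because it is an identity \emph{between} functions of the group variables $(x,\xi)$ and the bounded operators $\exp(i(\cdot))$, and both sides commute with the group action and agree on the defining representation. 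Equivalently, one checks the identity on each Hermite-type eigenvector of $\wt L$ and uses that the integral of $\wh g$ against $\exp(i(x\wt Q+\xi\wt P))$ acts on the $\lambda$-eigenspace as multiplication by the same scalar it produces on the $\lambda$-eigenspace of the model operator, namely $\tilde f(\lambda)=f(\lambda)$. For the first line: by hypotheses (1)--(4) the pair $(A,B)$ on $H$ generates, via $\exp(i(xA+\xi B)):=e^{\frac i2 x\xi}e^{ixA}e^{i\xi B}$, a unitary representation of the Heisenberg group on $H$ with the same commutation relations; the abstract harmonic oscillator $L=\tfrac12(A^2+B^2)$ is the image of the model harmonic oscillator under the intertwiner furnished by Stone--von Neumann (\cite[Theorem 5.2]{NP2}), so the \emph{same} $g$ again works — concretely, $f(L)=\tfrac1{(2\pi)^d}\int\wh g(x,\xi)\exp(i(xA+\xi B))\,\dd x\,\dd\xi$, the integral converging absolutely in $\mathcal L(H)$ because $\wh g\in\mathcal S(\R^{2d})$ and the groups are uniformly bounded. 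Finally, for the $X$-statement: by hypotheses (2)--(3) the groups $(e^{itA_j})$, $(e^{itB_j})$ on $X$ are obtained by complex interpolation and are consistent with those on $H$ and $Y$ on the dense subspace $H\cap Y$; since $\wh g\in\mathcal S$ the Bochner integral $\tfrac1{(2\pi)^d}\int\wh g(x,\xi)\exp(i(xA+\xi B))\,\dd x\,\dd\xi$ converges absolutely in $\mathcal L(X)$, and it agrees on $H\cap Y$ with the corresponding operator on $H$ (which is $f(L)$ there), hence by density it equals the operator on $X$ denoted $f(L)$; no second function $g$ is needed, the same $g$ serves all three spaces.

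The main obstacle is purely one of bookkeeping rather than of substance: one must be careful that the single Schwartz function $g$ produced from the Weyl symbol of $f$ of the \emph{model} harmonic oscillator really is independent of the representation — i.e.\ that in passing from $(Q,P)$ on $L^2(\R^d)$ to $(\wt Q,\wt P)$ on $L^2(\R^{2d})$ and to $(A,B)$ on $H$ one is genuinely just applying the Stone--von Neumann intertwiner to an identity between the \emph{same} functions of the group variables. Because $f$ is only bounded and measurable (not continuous), a small technical point is that $f$ must first be replaced by a Schwartz function $\tilde f$ agreeing with it on the (discrete, bounded) spectrum $\{\tfrac d2+k:k\le N\}$ of $L$; such $\tilde f$ exists by standard interpolation/cutoff since the relevant eigenvalues are finite in number, and then $g$ is built from $\tilde f$. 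Everything else — absolute convergence of the Bochner integrals, consistency on $H\cap Y$, and the transport of the formula under the intertwiner — is routine given \cite[Proposition 3.13, Theorem 5.2]{NP2}.
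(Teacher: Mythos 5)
Your overall architecture matches the paper's proof: first obtain the Weyl-integral formula for $f$ of the model harmonic oscillator $L^{(d)}=-\frac12\Delta+\frac12|x|^2$ on $L^2(\R^d)$, then decompose $H$ (and $L^2(\R^{2d})$) via Stone--von Neumann into a direct sum of copies of the Schr\"odinger representation and transport the identity through the intertwining unitaries, and finally pass to $X$ by density of $H\cap Y$. Steps two and three of your argument are sound and essentially identical to the paper's.

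The one place you leave a genuine gap is the construction of $g$. You assert that the Weyl symbol of $\tilde f(L^{(d)})$, for a Schwartz function $\tilde f$ agreeing with $f$ on the (finitely many relevant) eigenvalues, is Schwartz ``because the Mehler kernel is Gaussian and $\tilde f$ is Schwartz''; this is not a proof of that fact, and the citation you offer in support, \cite[Proposition~3.13, Theorem~5.2]{NP2}, does not contain it --- those results concern the Baker--Campbell--Hausdorff formula for the group $\exp(i(xA+\xi B))$ and the generation of the semigroup $e^{-tL}$, not the Weyl symbol of $f(L^{(d)})$. The paper instead invokes Peetre's Laguerre-transform formula (\cite{p}, \cite[Theorem~1.36]{tbook}), which gives $\widehat g=\sum_{n\in\N}f(n)L_n$ explicitly as a \emph{finite} sum of Laguerre functions of the form $L_n^{(d-1)}(|(x,\xi)|^2/2)e^{-|(x,\xi)|^2/4}$ (finite because $f$ is compactly supported and the spectrum is discrete); this makes the Schwartz property of $\widehat g$ immediate and avoids replacing $f$ by an interpolating Schwartz function $\tilde f$. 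Your route could certainly be made rigorous, but as written it silently assumes a pseudodifferential fact (the Weyl symbol of a Schwartz function of $L^{(d)}$ is Schwartz) that needs either Peetre's expansion or an explicit Mehler/spectral computation, and neither is supplied.
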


\begin{proof}
We consider $f(L)$ as acting on $H$ and $f({ L_{HO}})$ as acting on $L^{2}(\R^{d})$, where  ${ L_{HO}} = -\frac{1}{2}\Delta + \frac{1}{2}|x|^{2}$.
By a result of Peetre \cite{p} (see also Thangavelu's book \cite[Theorem 1.36]{tbook}) we have
$$f({ L_{HO}}) =
\frac{1}{(2\pi)^{d}} \int _{\R^{2d}} \widehat{g}(x,\xi)
\exp(i(xQ+ \xi P)) \ud x \ud \xi, $$
where $Q = (Q_{1},\dots,Q_{d})$ and $P = (P_{1},\dots,P_{d})$ are the $d$-tuples of { standard position and momentum} operators on $L^2(\R^d)$
 given by
\begin{align*}
Q_{j}f(x):=x_jf(x), \quad P_{j}f(x):=\frac{1}{i}\frac{\partial f}{\partial x_j}(x); \quad x\in\R^d,
\end{align*}
and the Schwartz function $g \in \mathcal{S}(\R^{2d})$ is given by
$\widehat{g} = \sum  _{n \in \N} f(n) L_{n}$, where
$$
L_{n}(x,\xi) = L^{(d-1)}_{n}(|(x,\xi)|^{2}/2)\exp(-|(x,\xi)|^{2}/{4})
$$
with $L^{(d-1)}_{n}$ the $n$-th Laguerre polynomial of type $d-1$ (see \cite[pages 7 and 19-22]{tbook}).
The result now follows from the Stone--von Neumann uniqueness theorem
(see \cite[Theorem 14.8]{Hall}). Indeed, the Hilbert space $H$ can be written as a countable orthogonal direct sum of Hilbert spaces $(H_{k})_{k \in K}$, and there are unitary operators $U_{k}: H_{k} \to L^{2}(\mathbb{R}^{d})$ that establish a unitary equivalence between $\exp(i(xA+ \xi B))_{|H_{k}}$ and
$\exp(i(xQ+ \xi P))$, as well as $f(L)_{|H_{k}}$ and $f(\widetilde{L})$. We thus have, for all $k\in K$,
\begin{align*}
f(L)_{|H_{k}} = U_{k}^{-1} f(\widetilde{L}) U_{k} &= \frac{1}{(2\pi)^{d}} \int  _{\R^{2d}} \widehat{g}(x,\xi)U_{k}^{-1}\exp(i(xQ+ \xi P))U_{k} \ud x \ud \xi \\
&= \frac{1}{(2\pi)^{d}} \int  _{\R^{2d}} \widehat{g}(x,\xi) \exp(i(xA +\xi B))_{|H_{k}} \ud x \ud \xi.
\end{align*}
This gives the first identity in \eqref{eq:fLftL}. The second identity is proven in the same way.
The final assertion follows from the fact that $H\cap X$ is dense in $X$; this is a trivial consequence of the fact that
$H\cap Y$ is contained in $X$ and is dense in this space.
\end{proof}

\begin{remark}\label{rk:invert}
Lemma \ref{lem:transfer} is also valid for certain functions that are not compactly supported.
For instance, \cite[Theorem 5.2]{NP2} gives that
$$
\exp\Bigl(-t(L- \frac{d}{2})\Bigr) = \frac{1}{(2\pi)^{d}} \int  _{\R^{2d}} \widehat{a_{t}}(x,\xi)
\exp(i(xA +\xi B)) \ud x \ud \xi ,
$$
for $a_{t}(x,\xi):=  (1+\lambda_t)^d \exp(-\lambda_t(|x|^2+|\xi|^2))$
with $\lambda_t =  \frac{1-e^{-t}}{1+e^{-t}}$.
In particular, for the harmonic oscillator ${ L_{HO}}=-\frac{1}{2}\Delta + \frac{1}{2}|x|^{2}$ this gives
$$
\exp\Bigl(-t({ L_{HO}}- \frac{d}{2})\Bigr) = \frac{1}{(2\pi)^{d}} \int  _{\R^{2d}} \widehat{a_{t}}(x,\xi)
\exp(i(xQ +\xi P)) \ud x \ud \xi.
$$

The Laguerre transform formula of Peetre used in the proof of Lemma \ref{lem:transfer} sheds  some light on this formula.
Indeed, assuming $d=1$ for simplicity, one has
$$
 \exp\Bigl(-t\bigl({ L_{HO}}-\frac12\bigr)\Bigr)u = \sum _{n \in \N} \exp(-nt)P_{n}u, \quad u \in L^{2}(\R),
$$
where $P_{n}u =  \langle u,H_{n} \rangle H_{n} $ and $H_{n}$ is the $n$-th normalised Hermite function (see \cite[Pages 1-6]{tbook}). By \cite[Theorem 1.3.6]{tbook}, we have
$$P_{n} =  \frac{1}{2\pi} \int _{\R^{2}} L_{n}(\frac{1}{2}(|x|^{2}+|\xi|^{2}))\exp(-\frac{1}{4}(|x|^{2}+|\xi|^{2}))\exp(i(xQ +\xi P)) \ud x \ud \xi.$$
This leads to the identity
\begin{align*}
  & \int _{\R^{2d}} \widehat{a_{t}}(x,\xi)
\exp(i(xQ +\xi P)) \ud x \ud \xi
\\ & = \sum _{n \in \N} \exp(-nt) \int _{\R^{2}} L_{n}(\frac{1}{2}(|x|^{2}+|\xi|^{2}))\exp(-\frac{1}{4}(|x|^{2}+|\xi|^{2}))\exp(i(xQ +\xi P)) \ud x \ud \xi.
\end{align*}
Hence,
by the inversion formula for the Weyl transform \cite[Theorem 1.2.1]{tbook},
\begin{align}\label{eq:Laguerre}\widehat{a_{t}}(x,\xi) =  \sum _{n \in \N} \exp(-nt) L_{n}(\frac{1}{2}(|x|^{2}+|\xi|^{2}))\exp(-\frac{1}{4}(|x|^{2}+|\xi|^{2})).
\end{align}
On the other hand, for all $a>-1$ and $r\ge 0$ one has
$$\exp(-ar) = \sum _{n \in \N} \frac{a^{n}}{(a+1)^{n+1}}L_{n}(r) = \frac1{a+1} \sum _{n \in \N} \frac{1}{(1+a^{-1})^{n}}L_{n}(r)$$
(see, e.g., \cite{erd}). For $a^{-1} = e^{t}-1$ this gives
$$\exp\Bigl(-( e^{t}-1)^{-1} r\Bigr) =  (1-e^{-t})\sum _{n \in \N} \exp(-nt)L_{n}(r).$$
Taking $r = \frac{1}{2}(|x|^{2}+|\xi|^{2})
$ and substituting the resulting identity into \eqref{eq:Laguerre}, after some computations we obtain
\begin{align*} \widehat{a_{t}}(x,\xi)
= \frac{1}{1-e^{-t}}\exp\Bigl(-\frac14 \frac{1+e^{-t}}{1-e^{-t}}(|x|^{2}+|\xi|^{2})\Bigr)
\end{align*}
and hence, inverting the Fourier transform, for $d=1$ we arrive at
$$ a_{t}(x,\xi) = \Bigl(1+ \frac{1-e^{-t}}{1+ e^{-t}}\Bigr)\exp\Bigl(- \frac{1-e^{-t}}{1+e^{-t}}(|x|^{2}+|\xi|^{2})\Bigr)$$
as desired.
\end{remark}

\begin{theorem}
\label{thm:main1}
Let $s>d$ and $f:[0,\infty)\to \C$
be a bounded, measurable, compactly supported function such that
$$\underset{t>0}{\sup} \|\eta\cdot  \delta_{t}(f)\|_{W_{s} ^{\infty}(0,\infty)}<\infty,$$ where $(\delta_{t}(f))(x) = f(tx)$ and $\eta \in C^{\infty}_{\rm c}(0,\infty)$ is a fixed non-zero cut-off function. Then $f(L)$ is bounded on $X$ and we have the estimate $$
\|f(L)\|_{\calL(X)} \leq C \Bigl(\underset{t>0}{\sup} \|\eta\cdot  \delta_{t}(f)\|_{W_{s} ^{\infty}(0,\infty)} + |f(0)|\Bigr),
$$
for a constant $C$ independent of $\eta$ and $f$. \end{theorem}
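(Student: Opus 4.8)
The plan is to reduce everything to the twisted Laplacian $\widetilde L$ via the transference principle \eqref{eq:transference}, then invoke the scalar Hörmander theorem for $\widetilde L$ together with Rubio de Francia's interpolation representation $X = [Y,H]_\theta$. First I would recall that, by Lemma \ref{lem:transfer}, for a compactly supported bounded measurable $f$ the operator $f(L)$ on $X$ is represented by the same twisted-convolution integral $\frac{1}{(2\pi)^d}\int_{\R^{2d}}\widehat g(x,\xi)\exp(i(xA+\xi B))\ud x\ud\xi$ that represents $f(\widetilde L)$ on $L^2(\R^{2d})$; consequently the transference estimate \eqref{eq:transference} — or rather its UMD-lattice version obtained from \cite[Proposition 6.3, Lemma 6.4]{NP2} — yields
\[
\|f(L)\|_{\calL(X)} \lesssim \|f(\widetilde L\otimes I_X)\|_{\calL(L^2(\R^{2d};X))}.
\]
So it suffices to bound the right-hand side by the stated Hörmander-type seminorm of $f$ plus $|f(0+)|$.

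Next I would treat the scalar building block. The operator $\widetilde L$ is self-adjoint on $L^2(\R^{2d})$ and $-\widetilde L$ generates a semigroup whose kernel is explicitly computed in \cite{NP2} (the Mehler-type kernel behind Remark \ref{rk:invert}); this kernel satisfies Gaussian upper bounds on $\R^{2d}$ with its Lebesgue measure, which is doubling. Hence \cite[Theorem 3.2]{dsy} applies and gives, for $s > d = \tfrac12\cdot 2d$,
\[
\|f(\widetilde L)\|_{\calL(L^p(\R^{2d},w))} \lesssim_{p,w,[w]_{A_p}} \sup_{t>0}\|\eta\cdot\delta_t(f)\|_{W^\infty_s(0,\infty)} + |f(0+)|
\]
for every $p\in(1,\infty)$ and every Muckenhoupt weight $w\in A_p$, with the constant depending on $p$ and the $A_p$-characteristic of $w$ only. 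The point is that the smoothness threshold $s>d$ in the theorem is exactly half the ambient dimension $2d$ of $\R^{2d}$, which is why the statement is phrased with $s>d$ rather than $s>2d$.

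Then I would vectorise. Since the above weighted estimate holds with $A_p$-constant-dependence that is uniform in $w$, Rubio de Francia's extrapolation-to-vector-valued principle \cite{rubio} (in the form: a scalar operator bounded on all $L^p(w)$, $w\in A_p$, with $[w]_{A_p}$-uniform bounds is bounded on $L^p(\R^{2d};\widetilde X)$ for every UMD lattice $\widetilde X$) gives
\[
\|f(\widetilde L\otimes I_X)\|_{\calL(L^2(\R^{2d};X))} \lesssim \sup_{t>0}\|\eta\cdot\delta_t(f)\|_{W^\infty_s(0,\infty)} + |f(0+)|,
\]
using that $X$ is a UMD lattice, hence so is $L^2(\R^{2d};X)$ — or, more directly, applying the vector-valued extrapolation with target lattice $X$ and exponent $p=2$. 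Combining this with the transference estimate from the first paragraph yields the claimed bound on $\|f(L)\|_{\calL(X)}$, with a constant independent of $f$ and of the cut-off $\eta$ (the $\eta$-independence follows because different admissible cut-offs give comparable seminorms, a standard fact about Hörmander classes).

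The main obstacle I anticipate is the careful justification of the vector-valued transference step: one must check that the UMD-lattice-valued version of \cite[Proposition 6.3, Lemma 6.4]{NP2} genuinely applies here, i.e.\ that the twisted-convolution representation of $f(L)$ on $X$ from Lemma \ref{lem:transfer} is compatible with the Heisenberg-group transference machinery when the target is $L^2(\R^{2d};X)$ rather than $L^2(\R^{2d})$, and that the relevant group is amenable so that transference is available with constants independent of $f$. A secondary technical point is verifying that the explicit heat kernel of $\widetilde L$ from \cite{NP2} satisfies Gaussian bounds in the precise form demanded by \cite[Theorem 3.2]{dsy} (including the on-diagonal lower bound / volume normalisation), and that the weighted version of that theorem is available with $[w]_{A_p}$-uniform constants so that Rubio de Francia's extrapolation can be invoked.
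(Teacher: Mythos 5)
Your proposal is correct and follows essentially the same route as the paper: Lemma \ref{lem:transfer} plus the transference principle of \cite[Proposition 6.3]{NP2} to pass to $f(\widetilde L)\otimes I_X$ on $L^2(\R^{2d};X)$, the weighted scalar multiplier bound from \cite[Theorem 3.2]{dsy} (with $s>d=\tfrac12\cdot 2d$) justified by the explicit Mehler-type heat kernel from \cite{NP2}, and Rubio de Francia extrapolation \cite[Theorem 5]{rubio} to vectorise over the UMD lattice $X$. The only difference is cosmetic ordering of the extrapolation and scalar steps.
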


\begin{proof}
By Lemma \ref{lem:transfer}, we can apply the transference principle \cite[Proposition 6.3]{NP2} to obtain that  $f(L)$ is bounded on $X$ and
\begin{align}\label{eq:transf}
\|f(L)\|_{\calL(X)} \lesssim \|f(\widetilde{L}) \otimes I_{X}\|_{\calL(L^{2}(\mathbb{R}^{2d};X))}.
\end{align}
By Rubio de Francia's extrapolation theorem from \cite[Theorem 5]{rubio} (see also \cite[Theorem 1.1]{alv}), we have
$$\|f(\widetilde{L}) \otimes I_{X}\|_{\calL(L^{2}(\mathbb{R}^{2d};X))}
 \lesssim \Bigl(\underset{t>0}{\sup} \|\eta\cdot  \delta_{t}(f)\|_{W_{s} ^{\infty}(0,\infty)} + |f(0)|\Bigr)$$
as long as
\begin{equation*}
\|f(\widetilde{L})\|_{\calL(L^{2}(\mathbb{R}^{2d},w))}
 \lesssim \Bigl(\underset{t>0}{\sup} \|\eta\cdot  \delta_{t}(f)\|_{W_{s} ^{\infty}(0,\infty)} + |f(0)|\Bigr),
 \end{equation*}
 for all Muckenhoupt weights $w \in A_{2}$. Such {{weighted}} bounds hold for spectral multipliers $f(A)$ as long as $A$ generates a semigroup with pointwise Gaussian heat kernel bounds, thanks to
\cite[Theorem 3.2]{dsy} (where $D=0$ as we work on $\R^{2d}$). We thus only need to prove these heat kernel bounds \cite[Assumption (GE)]{dsy}, i.e., to show that there exist $c,C>0$ such that for all $t>0$ and $x,y,\xi,\eta \in \R^{d}$,
$$
|k_{t}(x,y,\xi,\eta)| \leq Ct^{-d} \exp \Big(-\frac{|x-\xi|^{2}+|y-\eta|^{2}}{ct}\Big),
$$
where $\exp(-t\tilde{L})f(x,y) = \int _{\R^{2d}} k_{t}(x,y,\xi,\eta)f(\xi,\eta)\ud\xi \ud\eta$,
This follows from (5.2) and Lemma 6.4 in \cite{NP2} (or from Remark \ref{rk:invert}), which give that, for all $u \in L^{p}(\R^{2d})$,
$$
\exp\Bigl( -t(\widetilde{L} -\frac12 d)\Bigr)u(x,y) =  \frac{1}{(2\pi)^{d}}\int _{\R^{2d}} \widehat{a_{t}}(x-\xi, y-\eta) \exp(\frac{i}{2}(\xi y-\eta x))u(\xi,\eta) \ud \xi  \ud\eta,
$$
where, as in Remark \ref{rk:invert},
$$a_{t}(x,y) =  (1+\lambda_t)^d\exp\Bigl(-\lambda_t(|x|^2+|y|^2)\Bigr)$$ and
$\lambda_{t} = \frac{1-e^{-t}}{1+e^{-t}}$,
and thus
\begin{align*}
\ & |\exp(-\frac{td}{2}) \widehat{a_{t}}(x-\xi, y-\eta) \exp(\frac{i}{2}(\xi y-\eta x))|
\\ & \qquad\qquad \lesssim \exp(-\frac{td}{2}) \lambda_{t}^{-d} \exp\Bigl(-\frac{|x-\xi|^2+|y-\eta|^2}{\lambda_t}\Bigr).
\end{align*}
\end{proof}

\begin{remark} The transference estimate \eqref{eq:transf}
more generally holds in the form
$$
\|f(L)u\|_{X} \lesssim \|f(\widetilde{L}) \otimes I_{X}\|_{\calL(L^{p}(\mathbb{R}^{2d};X))} \|u\|_{X}.
$$
In the most common situation where $X=L^{p}(M)$ with $p \in (1,\infty)$,  by working with this estimate one can avoid Rubio de Francia's extrapolation theorem. Indeed, a simple application of Fubini's theorem  (see \cite[Proposition 2.2]{HNVW1}) now gives
\begin{align*}
\|f(L)u\|_{L^{p}(M)} &\lesssim \|f(\widetilde{L}) \otimes I_{L^{p}(M)}\|_{\calL(L^{p}(\mathbb{R}^{2d};L^{p}(M)))} \|u\|_{L^{p}(M)}
\\ & = \|f(\widetilde{L})\|_{\calL(L^{p}(\mathbb{R}^{2d}))} \|u\|_{L^{p}(M)}.
\end{align*}
The $L^{p}$ estimates for $f(\widetilde{L})$ then follow from \cite[Theorem 3.2]{dsy} (or even older results without weights).
\end{remark}

\begin{theorem}
\label{thm:Hinf}
The operator $L-\frac{d}{2}I$
has a bounded $H^{\infty}$ functional calculus of zero angle {on $X$}.
\end{theorem}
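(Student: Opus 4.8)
The plan is to deduce the bounded $H^\infty$ calculus of zero angle for $L - \frac d2 I$ on $X$ from the corresponding statement for the twisted Laplacian $\widetilde L - \frac d2 I$ on $L^2(\R^{2d};X)$, using exactly the same transference-plus-extrapolation machinery that drove Theorem \ref{thm:main1}. First I would recall that a sectorial operator $T$ has a bounded $H^\infty$ calculus of angle $0$ precisely when, for every $\sigma \in (0,\pi)$, there is a constant $C_\sigma$ with $\|\varphi(T)\|_{\calL(X)} \le C_\sigma \|\varphi\|_{H^\infty(\Sigma_\sigma)}$ for all $\varphi \in H^\infty(\Sigma_\sigma) \cap H^\infty_0$; by density it suffices to bound $\|\varphi(T)\|$ for $\varphi$ in a norming subclass (say $\varphi$ holomorphic on a larger sector with enough decay, so that $\varphi$ restricted to $[0,\infty)$ is bounded, measurable and — after a cutoff — the Lemma \ref{lem:transfer} representation applies, or one approximates by such functions). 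The shift by $\frac d2$ is what makes $L - \frac d2 I$ and $\widetilde L - \frac d2 I$ injective sectorial operators with spectrum in $[0,\infty)$, matching the heat-kernel computation of Remark \ref{rk:invert}.

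The key steps, in order: (1) Reduce, via the transference principle \cite[Proposition 6.3]{NP2} as encapsulated in \eqref{eq:transf} and Lemma \ref{lem:transfer}, the bound $\|\varphi(L - \frac d2 I)\|_{\calL(X)} \lesssim \|\varphi(\widetilde L - \frac d2 I)\otimes I_X\|_{\calL(L^2(\R^{2d};X))}$. (2) Apply Rubio de Francia's extrapolation \cite[Theorem 5]{rubio} (equivalently \cite[Theorem 1.1]{alv}) to reduce the vector-valued bound on $L^2(\R^{2d};X)$ to weighted scalar bounds $\|\varphi(\widetilde L - \frac d2 I)\|_{\calL(L^2(\R^{2d},w))} \lesssim \|\varphi\|_{H^\infty(\Sigma_\sigma)}$ for all $w \in A_2$, uniformly in $w$ with the $A_2$-characteristic. (3) Establish that weighted bound from \cite[Theorem 3.2]{dsy}: since $\widetilde L - \frac d2 I$ is self-adjoint and nonnegative on $L^2(\R^{2d})$ and the semigroup $(\exp(-t(\widetilde L - \frac d2 I)))_{t\ge0}$ satisfies pointwise Gaussian heat kernel bounds (verified in the proof of Theorem \ref{thm:main1} from \cite[(5.2), Lemma 6.4]{NP2}), the Davies–Sikora–Yan theorem gives a Hörmander-type functional calculus on $L^2(\R^{2d},w)$ with weight-uniform constants; because any $\varphi \in H^\infty(\Sigma_\sigma)$ has, in particular, finite Mihlin–Hörmander norm of order $s$ for every $s$, with constant controlled by $\sigma$ and $\|\varphi\|_{H^\infty(\Sigma_\sigma)}$, this yields the needed scalar estimate. (4) Chain the three estimates, note the constant $C_\sigma$ depends only on $\sigma$, and conclude that $L - \frac d2 I$ has a bounded $H^\infty$ calculus on $\Sigma_\sigma$ for every $\sigma > 0$, i.e.\ of angle zero. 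A density/approximation argument passes from the cutoff-and-Schwartz representatives of Lemma \ref{lem:transfer} to all of $H^\infty_0(\Sigma_\sigma)$.

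The main obstacle I anticipate is step (3): making the implication ``$\varphi \in H^\infty(\Sigma_\sigma)$ $\Rightarrow$ $\varphi$ satisfies the hypotheses of \cite[Theorem 3.2]{dsy} with constant $\lesssim_\sigma \|\varphi\|_{H^\infty(\Sigma_\sigma)}$'' precise and uniform. One must control the weighted norm $\|\varphi(\widetilde L - \frac d2 I)\|_{\calL(L^2(\R^{2d},w))}$ by an $H^\infty(\Sigma_\sigma)$ norm rather than merely by a Hörmander $W^\infty_s$ norm — the point being that on the sector $\Sigma_\sigma$ the Cauchy-type estimates $|x^k \varphi^{(k)}(x)| \lesssim_{k,\sigma} \|\varphi\|_{H^\infty(\Sigma_\sigma)}$ hold for all $k$, so the Mihlin norm $\sup_{t>0}\|\eta \cdot \delta_t(\varphi)\|_{W^\infty_s(0,\infty)}$ appearing in Theorem \ref{thm:main1} is dominated by $\|\varphi\|_{H^\infty(\Sigma_\sigma)}$ for every $s$. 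Once this is in hand, the zero-angle conclusion is essentially immediate: the estimate is uniform down to arbitrarily small $\sigma$. A secondary technical point is handling the point mass at $0$ (the $|f(0+)|$ term in Theorem \ref{thm:main1}): for $\varphi \in H^\infty_0$ one has $\varphi(0+) = 0$, so this term drops out and poses no difficulty. I would also remark that once Theorem \ref{thm:Hinf} is established, a Hörmander functional calculus for $L - \frac d2 I$ follows from the abstract transfer result \cite{KriegW}, consistent with the discussion preceding the statement.
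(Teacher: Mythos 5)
Your approach is genuinely different from the paper's, and it has a real gap at exactly the point you flag as the ``main obstacle''. The paper does \emph{not} attempt to transfer the $H^{\infty}$-norm bound directly. Instead it invokes the $R$-sectoriality of $L$ from \cite[Theorem 7.1]{NP2} together with the McIntosh-type square-function characterisation of the bounded $H^{\infty}$ calculus \cite[Theorem 10.4.9]{HNVW2}, following \cite[Theorem 8.1]{NP2} verbatim except for one missing ingredient: the discrete square-function estimate
\begin{equation*}
\sup_{s\in [1,2]}\sup_{N\ge 1}\mathbb{E}\Big\|\sum_{j=1}^{N} \eps_{j}\, \wt b_{2^{-j}s}(A,B)u\Big\|^2\lesssim \|u\|^2,\qquad u\in X,
\end{equation*}
where $\wt b_{t} = (1+\lambda_{2t})^{-d}a_{2t} - (1+\lambda_{t})^{-d}a_{t}$ are explicit Schwartz functions coming from the heat-kernel formula of Remark~\ref{rk:invert}. \emph{This} estimate (and only this one) is then proved by the transference$+$extrapolation route you have in mind, with the weighted $L^{2}$ input supplied not by \cite[Theorem 3.2]{dsy} but by the square-function estimate \cite[Proposition 6.8]{dsy} applied to the manifestly $H^{\infty}$-bounded scalar functions $z\mapsto\sum_{j}\eps_{j}\bigl((1+\lambda_{2^{-j+1}z})^{-d} - (1+\lambda_{2^{-j}z})^{-d}\bigr)e^{-2^{-j+1}z}$.

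The reason the paper goes this indirect route is precisely the gap in your step (1): the transference principle \cite[Proposition 6.3]{NP2} and the integral representation of Lemma~\ref{lem:transfer} are available for bounded, measurable, \emph{compactly supported} symbols $f$ (and for a few explicit non-compactly-supported ones, as in Remark~\ref{rk:invert}). Functions in $H^{\infty}_{0}(\Sigma_{\sigma})$ are never compactly supported on $[0,\infty)$, so you cannot apply \eqref{eq:transf} to $\varphi$ directly. Your proposed fix --- cut off, apply Theorem~\ref{thm:main1}, and pass to the limit --- requires two non-trivial verifications that are not supplied: (i) that the Hörmander norms of the truncations $\varphi\cdot\chi(\cdot/R)$ are uniformly bounded by $\|\varphi\|_{H^{\infty}(\Sigma_{\sigma})}$ (this part is fine, by the Cauchy estimates you mention), and more seriously (ii) that $(\varphi\cdot\chi(\cdot/R))(L)$, defined on $X$ by extending the $H$-spectral-theorem operator, actually converges strongly on $X$ to the operator $\varphi(L-\tfrac d2 I)$ defined by the Dunford--Riesz integral of the $H^{\infty}$ calculus. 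Establishing (ii) essentially amounts to re-proving a convergence lemma, and it presupposes a sectoriality statement for $L-\tfrac d2 I$ on $X$ that you use tacitly but do not establish; the paper gets it from \cite[Theorem 7.1]{NP2}. So while the architecture of your argument (transference $\to$ Rubio de Francia extrapolation $\to$ weighted scalar bound for $\wt L$) is the right one and is exactly what the paper uses \emph{inside} the square-function step, applying it to arbitrary $H^{\infty}$ symbols as you propose leaves a genuine hole that the paper's square-function route is specifically designed to avoid.
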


\begin{proof}
The proof uses the $R$-sectoriality of $L-\frac{d}{2}I$  proven in \cite[Theorem 7.1]{NP2} and the abstract square function characterisation for boundedness of the  $H^{\infty}$ calculus (see \cite[Theorem 10.4.9]{HNVW2}) exactly as in the proof of \cite[Theorem 8.1]{NP2}, up to the crucial estimate
\begin{equation*}
\sup_{s\in [1,2]}\sup_{N\ge 1}\mathbb{E}\Big\|\sum_{j=1}^{N} \eps_{j} \wt b_{2^{-j}s}(A,B)u\Big\|^2\lesssim \n u\n^2, \quad  u \in X,
\end{equation*}
where $\wt b_{t} : = (1+\lambda_{2t})^{-d}a_{2t} - (1+\lambda_{t})^{-d}a_{t}$. In \cite{NP2}, this estimate is proven using an {\em a priori} assumption on the $S^{0}$-boundedness of the Weyl calculus of the Weyl pair $(A,B)$. Here, we use transference as in the proof of Theorem \ref{thm:main1} instead. Let $s \in [1,2]$, $N \in \N\setminus\{0\}$, and $(\varepsilon_{j})_{j=1} ^{N} \in \{\pm 1\}^{N}$.  By the transference principle \cite[Proposition 6.3]{NP2} and \cite[Theorem 5]{rubio} (see also \cite[Theorem 1.1]{alv}), it suffices to show that
\begin{align*}
\Bigl\|\sum_{j=1}^{N} \eps_{j} & \Bigl((1+\lambda_{2^{-j+1}s})^{-d}\exp(-2^{-j+1}s\widetilde{L}) \\ & \qquad - (1+\lambda_{2^{-j}s})^{-d}\exp(-2^{-j+1}s\widetilde{L}) \Bigr)\Bigr\|_{\calL(L^{2}(\R^{2d},w))} \leq C,
\end{align*}
for all Muckenhoupt weights $w \in A_{2}$, and a constant $C\ge 0$ independent of $s,N$ and $(\varepsilon_{j})_{j=1} ^{N}$. This follows from \cite[Proposition 6.8]{dsy}, and the fact that
$$
z \mapsto \sum_{j=1}^{N} \eps_{j} \left((1+\lambda_{2^{-j+1}z})^{-d}\exp(-2^{-j+1}z) - (1+\lambda_{2^{-j}z})^{-d}\exp(-2^{-j+1}z) \right)
$$
belongs to $H^{\infty}(\Sigma_{\omega}):= \{z\in\C\setminus\{0\}:\, |\arg(z)|<\om\}$ for all $\omega\in (0,\frac12\pi)$, with a norm independent of $N$ and $(\varepsilon_{j})_{j=1} ^{N}$,
by \cite[Proposition H.2.3]{HNVW2}.
\end{proof}

With the same proof as \cite[Theorem 8.5]{NP2}, using Theorem \ref{thm:Hinf} and \cite{KriegW}, we can upgrade the holomorphic calculus to a H\"ormander calculus.

\begin{corollary}\label{cor:Hor}
The operator $L-\frac{d}{2}I$ has an $R$-bounded $\mathcal{H}_{2} ^{2d+\frac{1}{2}}$-H\"ormander calculus {on $X$}.
\end{corollary}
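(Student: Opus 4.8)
The plan is to deduce the corollary from Theorem \ref{thm:Hinf} by the abstract upgrade of an $H^{\infty}$ calculus to a H\"ormander calculus due to Kriegler and Weis \cite{KriegW}, following \cite[Theorem 8.5]{NP2} essentially verbatim. The only change is that the two ingredients of that argument which in \cite{NP2} rest on the {\em a priori} $S^{0}$-boundedness of the Weyl calculus of $(A,B)$ — the bounded $H^{\infty}$ calculus of angle zero, and the refined dyadic square function estimate feeding into \cite{KriegW} — are now available unconditionally: the first is Theorem \ref{thm:Hinf}, and the second is produced by the transference-and-extrapolation mechanism already employed in the proof of Theorem \ref{thm:Hinf}.

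Write $\mathcal{A}:=L-\frac{d}{2}I$. First I would record that, being a UMD lattice, $X$ is a Banach lattice of finite cotype and hence has Pisier's property $(\alpha)$; consequently the bounded $H^{\infty}(\Sigma_{\omega})$ calculus of $\mathcal{A}$ supplied by Theorem \ref{thm:Hinf} for every $\omega\in(0,\tfrac12\pi)$, together with the $R$-sectoriality of $L$ from \cite[Theorem 7.1]{NP2}, automatically yields an {\em $R$-bounded} $H^{\infty}$ calculus of angle zero for $\mathcal{A}$ on $X$; see \cite[Section 10.3]{HNVW2}. This is the operator-theoretic hypothesis of the Kriegler–Weis machinery. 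The remaining, analytic, hypothesis is a square function bound resolving a general multiplier dyadically and quantified by $\mathcal{H}_{2}^{\beta}$-norms, with $\beta=2d+\tfrac12$.

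To verify this analytic hypothesis I would argue exactly as in the proof of Theorem \ref{thm:Hinf}: by Lemma \ref{lem:transfer} and the transference principle \cite[Proposition 6.3]{NP2}, combined with \cite[Theorem 5]{rubio}, it suffices to prove the corresponding $R$-bounded dyadic square function estimate for the twisted Laplacian $\widetilde{L}$ on $L^{2}(\R^{2d},w)$, uniformly over all Muckenhoupt weights $w\in A_{2}$ and over dyadic families of multipliers of $\mathcal{H}_{2}^{2d+\frac12}$-norm at most one. This is delivered by the weighted scalar H\"ormander estimates of \cite[Theorem 3.2]{dsy} (equivalently \cite[Proposition 6.8]{dsy}), which apply because $(\exp(-t\widetilde{L}))_{t\ge 0}$ has pointwise Gaussian heat kernel bounds — the very estimate recorded in the proof of Theorem \ref{thm:main1} (from (5.2) and Lemma 6.4 of \cite{NP2}; see also Remark \ref{rk:invert}). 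Property $(\alpha)$ then promotes the bounded $\mathcal{H}_{2}^{2d+\frac12}$ calculus furnished by \cite{KriegW} to an $R$-bounded one. The exponent $2d+\tfrac12$, rather than the sharp Mihlin–H\"ormander threshold $d+1$ on $\R^{2d}$, reflects the dimension $2d$ of the transference space together with the loss inherent in a reduction that uses only the $R$-bounded $H^{\infty}$ calculus and a square function estimate and not the kernel of $f(\widetilde{L})$; the same exponent appears in \cite[Theorem 8.5]{NP2}.

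I expect the main obstacle to be organisational rather than conceptual: one must phrase the hypotheses of \cite{KriegW} in precisely the quantitative form needed (an $R$-bounded $H^{\infty}$ calculus of angle zero plus an $\mathcal{H}_{2}^{2d+\frac12}$-quantified dyadic square function bound), and then check that each such ingredient survives transference from $\mathcal{A}$ on $X$ to $\widetilde{L}$ on $L^{2}(\R^{2d};X)$ and, via Rubio de Francia extrapolation, descends to the scalar weighted setting governed by \cite{dsy}. Since all of these steps are already carried out for the particular multiplier family $\wt b_{t}$ in the proof of Theorem \ref{thm:Hinf}, and \cite[Theorem 8.5]{NP2} packages them against \cite{KriegW} in the conditional setting, what remains is only to observe that the conditional input of \cite{NP2} has now been removed.
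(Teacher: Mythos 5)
Your proposal matches the paper's one-line proof: run the argument of \cite[Theorem 8.5]{NP2} with the conditional $H^\infty$-calculus hypothesis now supplied unconditionally by Theorem \ref{thm:Hinf}, and apply the Kriegler--Weis upgrade \cite{KriegW}. One caveat: describing the analytic hypothesis of \cite{KriegW} as a square function bound quantified by $\mathcal{H}_2^{2d+\frac12}$-norms is imprecise and risks circularity, since the actual input to Kriegler--Weis is an $R$-bounded semigroup growth estimate (already verified unconditionally in \cite{NP2} via the heat kernel, independently of the $S^0$-boundedness assumption) from which the $\mathcal{H}_2^{2d+\frac12}$ calculus is \emph{derived}; consequently the fresh transference in your middle paragraph is not required.
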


As proven recently in \cite[Theorem 1.1]{DK}, this calculus can be further upgraded to give the following maximal function estimates.

\begin{corollary}
\label{cor:Hormander}
Assume that $X = L^{p}(\Omega;Z)$, with $p \in (1,\infty)$ and $Z$ a UMD lattice, and let $\Omega$ a $\sigma$-finite measure space. Let
$s> (2d+\frac32) + \max\{\frac12, \tau_{p,Z}^{-1} - c_{p,Z}^{-1}\}$, where $\tau_{p,Z}$ and $c_{p,Z}$ are the type and cotype of $L^p(\Omega;Z)$,
 and let $f \in W_{s} ^{2}(0,\infty)$  satisfy $f(0)=0$ and
$$\sum _{n \in \Z} \|\eta\cdot \delta_{2^{n}}(f)\|_{W_{s} ^{2}(0,\infty)}<\infty,$$
where $(\delta_{t}(f))(x) = f(tx)$,  $\eta \in C^{\infty}_{\rm c}(0,\infty)$ is a fixed cut-off function constantly equal to $1$ on the interval $(1,2)$. Then
$$
\Bigl\|\underset{t>0}{\sup}|f(tL)u|\Bigr\|_{L^{p}(\Omega;Z)} \leq
C \sum _{n \in \Z} \|\eta\cdot \delta_{2^{n}}(f)\|_{W_{s} ^{2}(\R)}\|u\|_{L^{p}(\Omega;Z)}, \quad  u \in L^{p}(\Omega;Z),
$$
for a constant $C$ independent of $\eta$ and $f$.
\end{corollary}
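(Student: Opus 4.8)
The plan is to deduce the maximal estimate from the $R$-bounded $\mathcal H_2^{2d+\frac12}$-Hörmander calculus of $L-\tfrac d2 I$ obtained above, by invoking \cite[Theorem 1.1]{DK}; this theorem is the engine that converts an $R$-bounded Hörmander calculus on a space of the form $L^p(\Omega;Z)$ into a bound for the $t$-maximal operator $u\mapsto\sup_{t>0}|f(tL)u|$, which is precisely why the present corollary---unlike Theorems~\ref{thm:main1} and \ref{thm:Hinf}---requires $X$ to have this special form.

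The inputs to \cite[Theorem 1.1]{DK} are assembled as follows. The spectrum of $L$ is contained in $[\tfrac d2,\infty)$---on the Hilbert endpoint $H$ by the unitary equivalence with a direct sum of harmonic oscillators used in the proof of Lemma~\ref{lem:transfer}, and on $X$ by Theorem~\ref{thm:Hinf}, which makes $L-\tfrac d2 I$ sectorial of angle $0$. Hence $L$ itself carries an $R$-bounded $\mathcal H_2^{2d+\frac12}$-Hörmander calculus on $X$ (it differs from $L-\tfrac d2 I$ by the bounded shift $\tfrac d2 I$, and thanks to the spectral gap and the normalisation $f(0)=0$ this shift does not affect the seminorms that matter; alternatively one works throughout with $L-\tfrac d2 I$), it is self-adjoint on $H$, the group data $(A,B)$ is consistent on $H$ and $Y$ (condition~\eqref{it:consistent}), and $X=[Y,H]_\theta=L^p(\Omega;Z)$. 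Feeding these into \cite[Theorem 1.1]{DK} yields the asserted estimate with the two smoothness thresholds it produces: $s'>s+\tfrac32$, where the extra $\tfrac32$ pays for passing from a function to the supremum over $t$ and for the ambient $W_{s'}^{2}$-regularity of $f$; and $s>(1-\theta)d+\tfrac12$, which arises as the complex interpolation at parameter $\theta$ of two endpoint maximal estimates---on $H$, the spectral theorem together with the elementary bound $\sup_{t>0}|\psi(t\,\cdot\,)|\lesssim\|\psi\|_{W^{2}_{1/2+\varepsilon},\,\mathrm{loc}}$ gives order just above $\tfrac12$, while on $Y$ the transference-and-extrapolation argument of the proofs of Theorems~\ref{thm:main1} and \ref{thm:Hinf}, applied this time at the level of the maximal operator on the $2d$-dimensional side via \cite[Theorem 3.2]{dsy} and \cite{rubio}, gives order $d+\tfrac12$ (half the transference dimension $2d$, plus $\tfrac12$ for the supremum); interpolation produces $(1-\theta)(d+\tfrac12)+\theta(\tfrac12+\varepsilon)$.

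I expect the main obstacle to be exactly this interpolation of orders: one cannot black-box the $\mathcal H_2^{2d+\frac12}$-Hörmander calculus and obtain the sharp order $(1-\theta)d+\tfrac12$, which may be far smaller than $2d+\tfrac12$; one needs the \emph{sharp} Hilbert-space maximal estimate on $H$ together with a maximal estimate on $Y$ of the correct order $d+\tfrac12$, and hence a maximal-operator refinement of the transference principle of \cite{NP2} combined with uniform-in-$t$ weighted bounds and square-function/$R$-boundedness estimates for the twisted Laplacian on $L^2(\R^{2d},w)$ from \cite[Theorem 3.2]{dsy} and Rubio de Francia extrapolation \cite{rubio}. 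Supplying and interpolating these endpoint estimates is the substance of \cite[Theorem 1.1]{DK}; by contrast, the handling of the spectral shift by $\tfrac d2$ and the uniform control of the dilates $f(t\,\cdot\,)$ near the origin (where $f(0)=0$ is used) and at infinity (where $f\in W_{s'}^{2}$ forces decay) are routine.
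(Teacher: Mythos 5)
Your approach coincides with the paper's: the paper's entire proof of this corollary is the one sentence ``As proven recently in \cite[Theorem 1.1]{DK}, this calculus can be further upgraded to give the following maximal function estimates,'' i.e.\ it cites \cite[Theorem~1.1]{DK}, applied to the $R$-bounded $\mathcal{H}_2^{2d+\frac12}$-Hörmander calculus of $L-\tfrac d2 I$ from the preceding corollary, which is exactly what you do. Your unpacking of the hypotheses fed into \cite{DK}, and your observation that the drop in the regularity threshold from $2d+\tfrac12$ to $(1-\theta)d+\tfrac12$ cannot be a mechanical consequence of black-boxing the $\mathcal{H}_2^{2d+\frac12}$-calculus but must come from interpolating with the self-adjoint Hilbert endpoint $H$, is a correct reading of what the paper silently delegates to \cite{DK}; one quantitative caveat is that the $d+\tfrac12$-order endpoint estimate on $Y$ that your sketch posits does not match the $2d+\tfrac12$-order calculus the paper actually records, so the precise bookkeeping of the interpolation would need to be checked directly against \cite{DK}, but this does not affect the high-level route, which is the same as the paper's.
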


For compactly supported symbols, we can also obtain the following H\"ormander type spectral multiplier estimates under less stringent assumptions on the regularity parameter $s$. Recall that $X = [Y,H]_{\theta}$ for some UMD space $Y$. For instance, when $X=L^{p}(M)$ for some arbitrary measure space $M$ (not necessarily doubling), we
may take $H = L^2(M)$ and $Y = L^q(M)$ with $1<q<p$ (when $1<p< 2$), respectively $p<q<\infty$ (when $2<p<\infty$).

\begin{theorem}
\label{thm:main2}
Let $s> (1-\theta)d+\frac{1}{2}$ and let $f:[0,\infty)\to \C$ be a bounded, measurable, compactly supported function such that
$$\underset{t>0}{\sup} \|\eta\cdot  \delta_{t}(f)\|_{W_{s} ^{2}(0,\infty)}<\infty,$$ where $(\delta_{t}(f))(x)= f(tx)$ and $\eta \in C^{\infty}_{\rm c}([0,\infty))$ is a fixed non-zero cut-off function. Then we have the estimate
$$
\|f(L)\|_{\calL(X)} \leq C \Bigl(\underset{t>0}{\sup} \|\eta\cdot \delta_{t}(f)\|_{W_{s} ^{2}(0,\infty)} + |f(0)|\Bigr),
$$
for a constant $C$ independent of $\eta$ and $f$.
\end{theorem}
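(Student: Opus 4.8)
The plan is to follow the strategy of the proof of Theorem~\ref{thm:main1} --- transfer the problem to the twisted Laplacian $\widetilde{L}$ --- but to replace the single application of Rubio de Francia's extrapolation on $X$ by a complex interpolation between the Hilbert endpoint $H$ and the UMD endpoint $Y$; it is this interpolation that produces the $\theta$-dependent gain in the smoothness exponent. Write $M:=\sup_{t>0}\|\eta\cdot\delta_t(f)\|_{W^2_s(0,\infty)}$. Since $W^2_r(\R)\hookrightarrow L^\infty$ for $r>\tfrac12$ and $s>(1-\theta)d+\tfrac12\ge\tfrac12$, a simple rescaling argument using that $\eta$ is non-zero gives $\|f\|_{L^\infty(0,\infty)}\lesssim M$, so in particular $|f(0+)|\lesssim M$ and it suffices to prove $\|f(L)\|_{\calL(X)}\lesssim M$. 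By Lemma~\ref{lem:transfer} and the transference principle \cite[Proposition~6.3]{NP2}, this follows once we show $\|f(\widetilde{L})\otimes I_X\|_{\calL(L^2(\R^{2d};X))}\lesssim M$.

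Recall that $X=[Y,H]_\theta$ with $H$ a Hilbert space and $Y$ a UMD space, so that $L^2(\R^{2d};X)=[L^2(\R^{2d};Y),L^2(\R^{2d};H)]_\theta$. I would record two endpoint estimates. On the Hilbert side, since $\widetilde{L}$ is self-adjoint on $L^2(\R^{2d})$ and $L^2(\R^{2d};H)$ is again a Hilbert space, every bounded Borel function $g$ on $[0,\infty)$ satisfies $\|g(\widetilde{L})\otimes I_H\|_{\calL(L^2(\R^{2d};H))}=\|g(\widetilde{L})\|_{\calL(L^2(\R^{2d}))}\le\|g\|_{L^\infty(0,\infty)}$. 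On the UMD side, the Gaussian heat kernel bounds for $\widetilde{L}$ obtained in the proof of Theorem~\ref{thm:main1} allow us to apply \cite[Theorem~3.2]{dsy} on all weighted spaces $L^2(\R^{2d},w)$, $w\in A_2$, and then Rubio de Francia's vector-valued extrapolation \cite[Theorem~5]{rubio} (see also \cite[Theorem~1.1]{alv}) for the UMD space $Y$; this yields $\|g(\widetilde{L})\otimes I_Y\|_{\calL(L^2(\R^{2d};Y))}\lesssim\sup_{t>0}\|\eta\cdot\delta_t(g)\|_{W^2_{s_Y}(0,\infty)}$ for every $s_Y>d+\tfrac12$. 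Here $d$ is half the dimension of $\R^{2d}$, and the additional $\tfrac12$ accounts for passing from the $W^\infty$-type smoothness that suffices in \cite[Theorem~3.2]{dsy} (compare Theorem~\ref{thm:main1}) to the $W^2$-scale via Sobolev embedding.

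To combine these endpoints with the correct bookkeeping of smoothness I would apply Stein's interpolation theorem to an analytic family of multipliers. Passing to the logarithmic variable $u=\log\lambda$, the hypothesis $M<\infty$ says exactly that $f$ lies, uniformly at unit scale, in $H^s(\R)$. Let $\Lambda:=(1-\partial_u^2)^{1/2}$ act in this variable, fix $c>0$, and set $F_z:=\Lambda^{c(z-\theta)}f$; then $F_\theta=f$, the map $z\mapsto F_z$ is analytic with symbols that are uniformly bounded Borel functions on $(0,\infty)$ for $\mathrm{Re}\,z$ in a neighbourhood of $[0,1]$, and the purely imaginary factors $\Lambda^{ic\,\mathrm{Im}\,z}$ enter all the norms below only through a polynomial factor in $\mathrm{Im}\,z$. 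On the line $\mathrm{Re}\,z=0$ the operator $\Lambda^{-c\theta}$ gains $c\theta$ derivatives, so $\sup_{t>0}\|\eta\cdot\delta_t(F_{it})\|_{W^2_{s+c\theta}}\lesssim M$ and the $Y$-endpoint applies provided $s+c\theta>d+\tfrac12$. On the line $\mathrm{Re}\,z=1$ the operator $\Lambda^{c(1-\theta)}$ loses $c(1-\theta)$ derivatives, leaving $F_{1+it}$ uniformly in $W^2_{s-c(1-\theta)}$ at unit scale; as above, $\|F_{1+it}\|_{L^\infty(0,\infty)}\lesssim M$ as soon as $s-c(1-\theta)>\tfrac12$, and the Hilbert endpoint applies. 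Both constraints can be satisfied by a suitable $c>0$ precisely when $s>(1-\theta)(d+\tfrac12)+\theta\cdot\tfrac12=(1-\theta)d+\tfrac12$, which is our assumption. Stein's theorem on the couple $(L^2(\R^{2d};Y),L^2(\R^{2d};H))$, whose $\theta$-interpolation space is $L^2(\R^{2d};X)$, then gives $\|f(\widetilde{L})\otimes I_X\|_{\calL(L^2(\R^{2d};X))}=\|F_\theta(\widetilde{L})\otimes I_X\|\lesssim M$, as required. The step I expect to be the main obstacle is the construction and control of this analytic family: one must check that the Bessel-potential deformations $\Lambda^{c(z-\theta)}$ in the logarithmic spectral variable --- which are non-local, and hence do not preserve compact support --- nevertheless transform the scale-invariant Sobolev--H\"ormander norms with exactly the stated gain or loss of derivatives and with at most sub-exponential growth in $\mathrm{Im}\,z$, and that the resulting symbols are admissible for Stein's theorem. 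Granting this, the transference step, the two endpoint bounds and the arithmetic of the exponents are routine in view of Lemma~\ref{lem:transfer}, Theorem~\ref{thm:main1}, \cite[Theorem~3.2]{dsy} and \cite[Theorem~5]{rubio}.
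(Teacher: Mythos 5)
Your proposal is a genuinely different route from the paper's proof, and it is worth flagging the comparison. The paper's argument is very short: it applies Theorem~\ref{thm:Hinf} to $\widetilde{L}\otimes I_X$ on the UMD lattice $L^{2}(\R^{2d};X)$ to obtain a bounded $H^{\infty}$ calculus, invokes the Deleaval--Kemppainen--Kriegler result \cite[Theorem~1.2]{dkk} as a black box to upgrade this to a H\"ormander calculus with the $\theta$-dependent smoothness threshold $(1-\theta)d+\tfrac12$, and then transfers back. You bypass \cite{dkk} (and indeed the $H^{\infty}$ calculus step entirely) and reconstruct the $\theta$-gain by hand, via Stein interpolation on the Calder\'on couple $(L^{2}(\R^{2d};Y),L^{2}(\R^{2d};H))$ together with the analytic family $F_z=\Lambda^{c(z-\theta)}f$ in the logarithmic spectral variable. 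Conceptually this is in the spirit of what \cite{dkk} does internally, and the exponent arithmetic you carry out does reproduce exactly $s>(1-\theta)d+\tfrac12$. The trade-off is transparency and self-containment versus length: the paper's citation-based proof is a one-liner, while your version requires one to actually control the Bessel-potential family, which you correctly identify as the delicate point.

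Two points deserve more care than the sketch gives them. First, your $Y$-endpoint invokes Rubio de Francia/Amenta--Lorist--Veraar extrapolation on $L^{2}(\R^{2d};Y)$; this machinery requires $Y$ to be a UMD Banach \emph{lattice}, whereas Section~\ref{sec:main} only records that $Y$ is a UMD space. In Rubio de Francia's construction $Y$ can in fact be taken to be a UMD Banach function space, so this is not fatal, but it should be stated explicitly since it is not part of what the paper itself asserts. Second, the step you flag --- that $\Lambda^{c(z-\theta)}$ in the variable $u=\log\lambda$ acts on the scale-invariant dyadic H\"ormander class $W^{2}_{s}$ with exactly the claimed gain/loss of derivatives, with at most sub-exponential growth in $\mathrm{Im}\,z$, and producing symbols still admissible for \cite[Theorem~3.2]{dsy} (in particular with a well-defined value at $0+$) --- is a real technical lemma, not a routine remark, since the Bessel potential is non-local and destroys compact support. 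Granting those two items, your argument is sound and gives an instructive alternative to the paper's reliance on \cite{dkk}.
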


\begin{proof}
Applying Theorem \ref{thm:Hinf} to $\widetilde{L} \otimes I_{X}$, we have $\widetilde{L} \otimes I_{X}$ has a bounded $H^{\infty}$ calculus in $L^{2}(\R^{d};X)$. By \cite[Theorem 1.2]{dkk}, we thus have the result for $\widetilde{L} \otimes I_{X}$ acting on $L^{2}(\R^{d};X)$ instead of $L$ acting on $X$. An application of Lemma \ref{lem:transfer}  (this is where the compact support assumption is used) and the transference theorem \cite[Proposition 6.3]{NP2} complete the proof.
\end{proof}

\section{Application to Bargmann-Fock space}
\label{sec:Fock}

In this section we show that our result can be applied to the harmonic oscillator on the Bargmann-Fock spaces of entire functions in $\mathbb{C}^d$ defined by
\begin{equation}
    A^{p,q}(\mathbb{C}^d)=\Big\{f \;\text{entire}: \int_{\R^{d}}\Big(\int_{\R^{d}}|f(x+iy)|^p e^{-p|x+iy|^2/2}\ud x\Big)^{q/p}\ud y<\infty\Big\},
\end{equation}
for $1<p,q<\infty$.
When $p=q=2$, $A^{2,2}(\mathbb{C}^{d})$ is the Hilbert space used for the Bargmann representation of the Heisenberg group. We recall its construction below. This example is  natural from a representation theory/mathematical physics point of view, but is quite challenging from a harmonic analysis point of view, because the measure $e^{-p|x+iy|^2/2}\ud x \ud y$ is not doubling (and hence results such as \cite{dsy} do not apply).

One defines the {\it raising} and {\it lowering operators} $a_j^*$ and $a_j$  by
\begin{align*}
    a_j^* f(z):=z_j f(z), \quad
    a_j f(z):=\frac{\partial}{\partial z_j}f(z), \quad  f \in A^{2,2}(\C^{d}),
\end{align*}
and the  {\it position} and {\it momentum} operators as follows:
\begin{equation}\label{P&M_onBarg}
 \begin{split}
    A_j&=\frac{1}{\sqrt{2}}(a_j^*+a_j),\\
    B_j&=\frac{i}{\sqrt{2}}(a_j^*-a_j).
 \end{split}
\end{equation}
It is easy to check that the operators $A_j$ and $B_j$ are self-adjoint, and that the pair $(A,B)$, where $A=(A_1,A_2,\dots,A_d)$ and $B=(B_1,B_2,\dots,B_d)$, satisfies the canonical commutations relations. We shall now see that $iA_j$ and $iB_j$ generate uniformly bounded $C_0$-groups on $A^{p,q}(\C^d)$, for all $1<p,q<\infty$.

For $a\in\C^d$, define operators $T_a$ by the formula
\begin{equation}\label{OpTa}
    (T_af)(z)=e^{\frac{-|a|^2}{2}}e^{-\bar{a}\cdot z}f(z+a)
\end{equation}
where $a\cdot b=\sum_ja_jb_j$, for any $a,b\in\C^d$.
\begin{lemma}
\label{lem:iso}
Let $1<p,q<\infty$.
For each $a\in\C^d$, the operator $T_a$ defined above is an isometry of $A^{p,q}(\C^d)$, and the map $a\mapsto T_a$ is strongly continuous.
\end{lemma}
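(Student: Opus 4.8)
The plan is to verify the two claimed properties of $T_a$ — that each $T_a$ is an isometry of $A^{p,q}(\C^d)$, and that $a \mapsto T_a$ is strongly continuous — essentially by direct computation, first at the level of pointwise absolute values and then by invoking density.

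\emph{Isometry.} First I would observe that for $f$ entire and $a \in \C^d$ one has the pointwise identity
$$
|(T_a f)(z)|^2 e^{-|z|^2} = e^{-|a|^2} |e^{-\bar a \cdot z}|^2 |f(z+a)|^2 e^{-|z|^2}.
$$
Writing $z = x+iy$ and $a = \alpha + i\beta$, a short computation with $|e^{-\bar a \cdot z}|^2 = e^{-2\,\mathrm{Re}(\bar a \cdot z)}$ shows that the exponential factors combine to give exactly $|f(z+a)|^2 e^{-|z+a|^2}$; that is,
$$
|(T_a f)(x+iy)|\, e^{-|x+iy|^2/2} = |f((x+\alpha)+i(y+\beta))|\, e^{-|(x+\alpha)+i(y+\beta)|^2/2}.
$$
Hence the weighted modulus of $T_a f$ is simply a translate (by $(\alpha,\beta) \in \R^{2d}$) of the weighted modulus of $f$. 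Since the $A^{p,q}$ norm is the mixed $L^q_y(L^p_x)$ norm of this weighted modulus, and since Lebesgue measure on $\R^d \times \R^d$ is translation invariant in both the inner and outer variables, the change of variables $x \mapsto x+\alpha$, $y \mapsto y+\beta$ gives $\|T_a f\|_{A^{p,q}} = \|f\|_{A^{p,q}}$ for all $f$ in the space; in particular $T_a f$ is again in $A^{p,q}(\C^d)$, and $T_a$ (which is clearly linear and, applying the same computation to $T_{-a}$, invertible) is a surjective isometry.

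\emph{Strong continuity.} For this I would use that $T_a$ are uniformly bounded (norm $1$) isometries, so by a standard $3\varepsilon$-argument it suffices to check $\|T_a f - f\|_{A^{p,q}} \to 0$ as $a \to 0$ for $f$ in a dense subclass — for instance finite linear combinations of the normalized monomials, or any convenient dense set of entire functions for which one has good decay of the weighted modulus. For such $f$, write $g(x,y) := |f(x+iy)|e^{-|x+iy|^2/2}$, a fixed function in $L^p_x \cap (\text{with the mixed norm})$; by the computation above, $\|T_a f - f\|_{A^{p,q}}$ is controlled by the mixed-norm modulus of continuity of $g$ under the translation by $(\mathrm{Re}\,a, \mathrm{Im}\,a)$, up to the difference $e^{-\bar a\cdot z} f(z+a) - f(z+a)$ inside the modulus which is handled by the same translate structure since $|e^{-\bar a\cdot z}| \to 1$ locally uniformly. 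Continuity of translation in mixed-norm $L^q(L^p)$ spaces (which follows from the $L^p$ and $L^q$ cases together with dominated convergence, using the uniform decay of $g$) then gives $\|T_a f - f\|_{A^{p,q}} \to 0$.

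\emph{Main obstacle.} The genuinely delicate point is the strong continuity: unlike the isometry property, it is not purely algebraic, and one must be a little careful that the cocycle factor $e^{-\bar a\cdot z}$ and the argument shift $f(\,\cdot\,+a)$ interact well with the weight — in particular that after pulling out the translate structure the residual terms vanish in the mixed norm and not merely pointwise. I expect this to require choosing the dense class so that $g$ has enough decay (e.g. Schwartz-type decay of the weighted modulus, which holds for polynomials times the Gaussian), so that translation continuity in $L^q(L^p)$ and the local-uniform convergence $e^{-\bar a\cdot z}\to 1$ can be combined via dominated convergence. The mixed-norm nature of $A^{p,q}$ adds only bookkeeping, not a real difficulty, since translation continuity in $L^q(\R^d; L^p(\R^d))$ reduces to the scalar cases.
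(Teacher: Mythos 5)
Your proposal follows essentially the same route as the paper: the isometry is established by combining the exponential factors to show the weighted modulus of $T_af$ is a translate of that of $f$, then applying translation invariance of Lebesgue measure, and strong continuity is reduced to the dense class of polynomials. The paper leaves the second step as a one-line remark, while you fill in the dominated-convergence details; the substance is the same.
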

\begin{proof}
For $f\in A^{p,q}(\C^d)$ we have (cf. \cite[Theorem 14.16]{Hall} for the special case $p=q=2$)
\begin{align*}
    ||T_af||^q_{A^{p,q}}&=\int_{\R^d}\Big(\int_{\R^d}e^{\frac{-p|a|^2}{2}}e^{-p\Re(\bar{a}\cdot z)}|f(x+iy+a)|^p e^{\frac{- p|z|^2}{2}}\ud x\Big)^{q/p}\ud y\\
    &=\int_{\R^d}\Big(\int_{\R^d}e^{-\frac{p}{2}|z+a|^2}|f(x+iy+a)|^p\ud x \Big)^{q/p}\ud y\\
    &=||f||^q_{A^{p,q}}.
\end{align*}
It can also be verified easily that $T_aT_b=e^{i \Im(\bar{a}\cdot b)}T_{a+b}$, which implies $T_aT_{-a}=I$. The strong continuity of $T_a$ can be verified on the set of polynomials, which are dense in $A^{p,q}(\C^d)$ (see \cite[Proposition 5]{GWo}).
\end{proof}

\begin{proposition}
Let $1<p,q<\infty$. The tuples of operators $A=(A_{j})_{j=1} ^{d}$ and $B=(B_{j})_{j=1} ^{d}$ form a Weyl pair in $A^{p,q}(\C^{d})$.
\end{proposition}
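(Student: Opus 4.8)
The plan is to establish the three defining properties of a Weyl pair from the excerpt's setup: (i) the operators $A_j$ and $B_j$ are self-adjoint on $A^{2,2}(\C^d)$; (ii) $iA_j$ and $iB_j$ generate uniformly bounded $C_0$-groups on $A^{p,q}(\C^d)$ for all $1<p,q<\infty$, consistently across these spaces; and (iii) the Weyl commutation relations hold. Property (i) is already recorded in the text just before equation \eqref{P&M_onBarg} (and is classical for $p=q=2$). So the real work is (ii) and (iii), and the key tool is the family of translation-type operators $T_a$ from \eqref{OpTa}, together with Lemma \ref{lem:iso}.

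First I would identify the one-parameter groups. A direct computation with the generators shows that $T_{t e_j/\sqrt 2}$ and $T_{it e_j/\sqrt 2}$ (suitably normalised, possibly up to a scalar phase) are exactly the groups generated by $iA_j$ and $iB_j$ respectively: differentiating $(T_af)(z)=e^{-|a|^2/2}e^{-\bar a\cdot z}f(z+a)$ at $a=0$ in the direction $e_j$ produces $-z_j f + \partial_{z_j} f = -\sqrt 2\, a_j^* f/\ldots$ — one matches this against $\pm iA_j = \pm\frac{i}{\sqrt2}(a_j^*+a_j)$ and $\pm iB_j=\mp\frac{1}{\sqrt2}(a_j^*-a_j)$ to pin down the correct complex direction and normalisation of $a$. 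The point is that choosing $a(t)$ to be a real multiple of $e_j$ or of $ie_j$ isolates, respectively, the $a_j^*+a_j$ combination and the $a_j^*-a_j$ combination. By Lemma \ref{lem:iso} each $T_a$ is an isometry and $a\mapsto T_a$ is strongly continuous, so each of these restricted families is a strongly continuous group of isometries on $A^{p,q}(\C^d)$; in particular it is uniformly bounded. Consistency across different $p,q$ (condition \eqref{it:consistent} in the general setup) is immediate because $T_a$ is defined by the same pointwise formula on every $A^{p,q}(\C^d)$ and polynomials, which are dense in all of them, lie in the intersection. That polynomials form a core for the generators, so that the generators really are (the closures of) $iA_j$, $iB_j$, follows from the standard fact that a dense, invariant subspace on which the group acts as a $C_0$-group is a core.

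Next, the Weyl commutation relations. These come straight out of the composition law $T_aT_b=e^{i\Im(\bar a\cdot b)}T_{a+b}$ established in the proof of Lemma \ref{lem:iso}. For two directions $a=se_j/\sqrt2$ and $b=te_k/\sqrt2$ (both "position" directions) one has $\bar a\cdot b=st\delta_{jk}/2\in\R$, so $\Im(\bar a\cdot b)=0$ and the groups $e^{isA_j}$ commute; similarly for two "momentum" directions $a=ise_j/\sqrt2$, $b=ite_k/\sqrt2$, where again $\bar a\cdot b=st\delta_{jk}/2$ is real. For a position direction against a momentum direction, $a=se_j/\sqrt2$, $b=ite_k/\sqrt2$, one gets $\bar a\cdot b = \tfrac{i}{2}st\delta_{jk}$, whence $\Im(\bar a\cdot b)=\tfrac12 st\delta_{jk}$, and comparing $T_aT_b$ with $T_bT_a$ yields the phase factor $e^{-ist\delta_{jk}}$ (after tracking the normalisation constant $\frac1{\sqrt2}$ in the definitions \eqref{P&M_onBarg}, which is exactly what makes the commutator come out to the canonical $\delta_{jk}$ rather than a multiple of it). This reproduces precisely the relations in condition \eqref{it:WeylCCR}.

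The main obstacle is bookkeeping rather than conceptual: getting the normalisation and the complex direction of $a$ exactly right so that $T_{a(t)}$ coincides with $e^{itA_j}$ (and not, say, $e^{it\sqrt2 A_j}$ or a phase-shifted variant), and correspondingly checking that the constant in the commutation relation is exactly $-ist\delta_{jk}$. Once the dictionary between the directions in $\C^d$ and the operators $a_j^*\pm a_j$ is fixed by the derivative computation at $a=0$, everything else — uniform boundedness, strong continuity, consistency, and the commutation relations — follows mechanically from Lemma \ref{lem:iso} and its composition identity. Hence the proposition will follow by assembling (i), (ii), (iii) and invoking the definition of a Weyl pair used throughout the paper.
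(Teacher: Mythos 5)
Your approach is essentially the paper's: identify the one-parameter groups as $T_{a}$ in the directions $e_j/\sqrt2$ and $ie_j/\sqrt2$, invoke Lemma~\ref{lem:iso} for the uniformly bounded $C_0$-group structure and consistency, differentiate the pointwise formula at $a=0$ to read off the generators, and derive the Weyl relations from the composition law $T_aT_b=e^{i\Im(\bar a\cdot b)}T_{a+b}$. The only thing to tighten is the labelling: your own derivative $-z_jf+\partial_{z_j}f=-(a_j^*-a_j)f=\sqrt2\,iB_jf$ shows that the \emph{real} direction $te_j/\sqrt2$ generates $iB_j$ (not $iA_j$) and the \emph{imaginary} direction $ite_j/\sqrt2$ generates $iA_j$, matching the paper's $U_j$, $V_j$; with this corrected assignment the phase in the mixed commutation computation comes out to $e^{-ist\delta_{jk}}$ with the right sign, whereas your stated choice of directions would produce $e^{+ist\delta_{jk}}$.
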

\begin{proof}

For $j=1,\dots,d$, define $U_j(t):=T_{ite_j/\sqrt{2}}$ and $V_j(t):=T_{te_j/\sqrt{2}}$. By Lemma \ref{lem:iso}, we have that  $\{U_j(t)\}_{t\in\mathbb{R}}$ and $\{V_j(t)\}_{t\in\mathbb{R}}$ are uniformly bounded $C_0$-groups in $A^{p,q}(\C^{d})$. The infinitesimal generator of these groups are $iA_j$ and $iB_j$ respectively. Indeed, if $f$ lies in the domain of the infinitesimal generator of $U_j(\cdot)$, the limit
\begin{equation*}
    (iA_jf)(z):=\lim_{t\to0}\frac{1}{t}[e^{t^2/{4}}e^{itz_j/\sqrt{2}}f(z+ite_j/{\sqrt{2}})-f(z)]
\end{equation*}
must exist in $A^{p,q}(\C^d)$ and coincide with the pointwise limit giving,
\begin{equation*}
    iA_jf(z)=\frac{i}{\sqrt{2}}\Big(z_jf(z)+\frac{\partial}{\partial z_j}f(z)\Big)
\end{equation*}
By dominated convergence theorem, the limit exists in $A^{p,q}$ for polynomials and hence by the density argument, for functions in the domain of $A$. The same analysis work for $B_j$ as well. Thus, $(A,B)$ forms a Weyl pair on $A^{p,q}(\C^d)$.
\end{proof}
All the spectral multiplier results proven in Section \ref{sec:main} thus apply to the harmonic oscillator $L = \frac{1}{2}\sum _{j=1} ^{d}(A_{j}^{2}+B_{j}^{2})$ acting on $A^{p,q}(\C^{d})$.

\begin{remark}
\label{rk:BBT}
It is shown in \cite{FG3} that $A^{p,q}$ can be identified with the modulation space $M^{p,q}$.
This identifies $L$ with the standard harmonic oscillator $L^{(d)}$ for which spectral multiplier results have been proven in \cite{BBT}.
For $p \neq q$ Theorem \ref{thm:main2} recovers \cite[Theorem 3.1]{BBT}. However, for $p=q$,  \cite[Theorem 3.3]{BBT} is a better result, as it only requires the optimal level of regularity $s>d/2$. The present paper and \cite{BBT} share a similar philosophy: to use transference as a replacement for the Stone--von Neumann uniqueness theorem when proving $L^p$ estimates for representations of the Heisenberg group. The paper \cite{BBT} focuses on concrete representations to reduce the problem to Fourier multipliers on the Heisenberg group. Here we take a more general approach that allows us to treat any representation on any UMD lattice. At this level of generality, we do not reach the optimal order of smoothness one could hope for in our assumptions. This is to be expected because our transference principle doubles the dimension (for example, if $X=L^{p}(\R^{d})$, we transfer to $L^{2}(\R^{2d};L^{p}(\R^{d})$). However, it is interesting to note that, even for the standard harmonic oscillator on $M^{p,q}$ for $p \neq q$, the result in \cite{BBT} features the exact same loss in the order of smoothness as in our Theorem \ref{thm:main2} ($s>d+\frac{1}{2}$ rather than $s>d/2$). One can also note that Corollary \ref{cor:Hormander} gives maximal function estimates for the harmonic oscillator on modulation spaces (or on Bargmann-Fock spaces). Such maximal function estimates appear to be new, and could be useful in PDE applications, in the spirit of the recent paper \cite{bmntt}.
\end{remark}

\section{open problems}

In this paper, we have partially solved the open problems from \cite{NP2}, by proving appropriate functional calculi results for $L$ under the sole assumption that $(A,B)$ is a Weyl pair on a UMD lattice. This leaves open many interesting questions that can be divided into four categories:
\begin{enumerate}
\item
Extending this transference approach to other nilpotent Lie groups (beyond the Heisenberg group).
\item Treating more general UMD spaces (in particular non-commutative $L^p$ spaces). \item Obtaining a full pseudo-differential calculus rather than just spectral multiplier results for Laplace like operators (that is, a joint functional calculus for $(A,B)$ rather than a functional calculus for $L$).
\item
Determining the optimal order of smoothness required for general spectral multiplier theorems.
\end{enumerate}
In the fourth direction, Remark \ref{rk:BBT} suggests that the order $s>d+\frac{1}{2}$ may not be as bad as it looks, and may even be optimal if one wants to treat any UMD lattice.
In the second direction, Lukas Hagedorn (personal communication)  has made substantial progress
using UMD valued Fourier multipliers directly on the Heisenberg group.
To make progress on the first and second directions, it would be highly interesting to extend the theory of oscillatory singular integral operators from \cite{RS} to the UMD valued setting (as it is done in \cite[$Tb$ Theorem 3]{Hyt-vvTb} for standard singular integral operators) or to the semicommutative $L^p$ setting (as it is done in \cite[Section 4.1]{jmpx} for standard singular integral operators).

\end{document}